\renewcommand{\eprint}[1]{#1}
\mathchardef\mhyph="2D
\DeclareSymbolFont{stmry}{U}{stmry}{m}{n}
\DeclareMathSymbol\stmryolt\mathbin{stmry}{"3C}
\DeclareMathSymbol\stmryogt\mathbin{stmry}{"3D}
\newcommand*\lon{%
        \nobreak
        \mskip6mu plus1mu
        \mathpunct{}%
        \nonscript
        \mkern-\thinmuskip
        {:}%
        \mskip2mu
        \relax
}
\numberwithin{equation}{section}
\newtheorem{theorem}{Theorem}[section]
\newtheorem{lemma}[theorem]{Lemma}
\newtheorem{proposition}[theorem]{Proposition}
\theoremstyle{remark}
\newtheorem{remark}[theorem]{Remark}
\newtheorem{example}[theorem]{Example}
\theoremstyle{definition}
\newtheorem{definition}[theorem]{Definition}
\newcommand{\C}{\mathbb{C}}
\newcommand{\cA}{\mathcal{A}}
\newcommand{\cB}{\mathcal{B}}
\newcommand{\cC}{\mathcal{C}}
\newcommand{\cD}{\mathcal{D}}
\newcommand{\cK}{\mathcal{K}}
\newcommand{\cM}{\mathcal{M}}
\newcommand{\cO}{\mathcal{O}}
\newcommand{\cR}{\mathcal{R}}
\newcommand{\cZ}{\mathcal{Z}}
\DeclareMathOperator{\End}{End}
\DeclareMathOperator{\Hom}{Hom}
\DeclareMathOperator{\Irr}{Irr}
\DeclareMathOperator{\Mor}{Mor}
\DeclareMathOperator{\Nat}{Nat}
\DeclareMathOperator{\Rep}{Rep}
\DeclareMathOperator{\Id}{Id}
\DeclareMathOperator{\id}{id}
\newcommand{\norm}[1]{\left \| #1 \right \|}
\newcommand{\vnotimes}{\mathbin{\bar{\otimes}}}
\newcommand{\tilderhd}{\mathbin{\tilde{\rhd}}}
\newcommand{\Hilb}{\mathrm{Hilb}}
\newcommand{\CB}{\mathcal{CB}}
\newcommand{\cCB}{\mathcal{CB}^c}
\newcommand{\YD}{\mathcal{YD}}
\title[Yetter--Drinfeld C$^*$-algebras]{Categorical dualtiy for Yetter--Drinfeld C$^*$-algebras\\
Beyond the braided-commutative case}
\date{April 23, 2025}
\author{Lucas Hataishi}
\address{University of Oxford}
\email{lucas.yudihataishi@maths.ox.ac.uk}
\author{Makoto Yamashita}
\address{Universitetet i Oslo}
\email{makotoy@math.uio.no}
\thanks{Supported by the NFR project 300837 ``Quantum Symmetry''.}
\begin{document}

\begin{abstract}
  We develop a tensor categorical duality in the sprit of the Tannaka--Krein duality for the C$^*$-algebras admitting the Yetter--Drinfeld module structure over a compact quantum group.
  Under this duality, given a reduced compact quantum group $G$, the Yetter--Drinfeld $G$-C$^*$-algebras correspond to the bimodule categories over the representation category $\Rep(G)$, satisfying a certain centrality condition.
\end{abstract}

\maketitle

\section{Introduction}

The Tannaka--Krein duality principle for quantum groups has been an effective tool to compare different incarnations of \emph{quantum symmetries}.
In the original form by Woronowicz~\cite{MR943923}, it states that compact quantum groups are essentially the same as a pair of a rigid C$^*$-tensor category and a tensor functor into the category of finite dimensional Hilbert spaces.
More recently, this was generalized to actions of quantum groups, which are modeled by comodule algebras on the algebraic side, and module categories on the tensor categorical side.

In the operator algebraic setting, Tannaka--Krein theory is mostly developed for compact quantum groups $G$, as the complete reducibility of the representations of $G$ allows us to control possibly infinite-dimensional algebras with an action of $G$ in an algebraic way.
While a straightforward generalization of this to locally compact quantum groups seems hopeless, there is one promising class of locally compact quantum groups with compact-by-discrete type structure like totally disconnected groups: the Drinfeld double quantum group $DG$, which can be considered as a matched pair of a compact quantum group $G$ and its discrete dual quantum group $\hat{G}$.

Actions of $DG$ on C$^*$-algebras can be described as actions of $G$ with additional structure encoding the action of $\hat{G}$.
Since $G$-actions can be described categorically, one might hope to describe $DG$-actions categorically as well, and obtain a Tannaka--Krein duality for $DG$-actions.
This program was initiated in~\cite{NY3}, where Neshveyev and the second named author found that embeddings of the C$^*$-tensor category $\Rep(G)$ of finite dimensional unitary representations of $G$ into other C$^*$-tensor categories correspond to continuous unital $DG$-C$^*$-algebras having \emph{braided-commutativity}.
In this paper, we remove this braided commutativity assumption, obtaining a categorical description of general continuous unital Yetter--Drinfeld C$^*$-algebras.
Specifically, we establish a categorical equivalence between the category of such algebras and the category of certain bimodule categories over the representation category $\Rep(G)$.
A similar duality statement was recently given in~\cite{FreslonTaipeWang} at the level weak monoidal functors developing the line of works by Pinzari--Roberts~\cite{pr1} and Neshveyev~\cite{n1}.

In a follow-up paper~\cite{HY2022}, we will use the results of this paper to show that the theory of boundary actions for Drinfeld doubles of compact quantum groups is invariant under monoidal equivalence.
In particular, for two monoidally equivalent compact quantum groups $G$ and $G'$, the Furstenberg-Hamana boundary $\partial_{FH}(DG)$ of $DG$ is trivial if and only if $\partial_{FH}(DG')$ is trivial.
This answers a question left open in~\cite{HHN1}.

Another potential application of the results in this paper is in geometric representation theory for quantum groups.
There is a non-trivial relation between the topology of surfaces and module categories over braided tensor categories~\cite{BZBJ2018}.
The category of unitary representations of the Drinfeld double $DG$ of a compact quantum group $G$ is the \emph{unitary Drinfeld center} $\cZ(\Hilb(\Rep(G)))$, where $\Hilb(\Rep(G))$ is the category of all unitary representations of $G$, i.e., it allows for infinite dimensional representations, see~\cite{NY2}.

\medskip
Now let us describe the content of the paper.

In Section~\ref{sec:preliminaries}, we collect the preliminary materials on rigid C$^*$-tensor categories, compact quantum groups, and their Drinfeld doubles.

In Section~\ref{sec:centr-br-bimod-cat}, we introduce what will later be shown to be the Tannaka--Krein categorical dual of  continuous unital Yetter--Drinfeld C$^*$-algebras.
In order to account for the extra structures and properties of Yetter--Drinfeld C$^*$-algebras, we consider bimodule categories $(\cM,m)$ over $\Rep(G)$ with a distinguished $\Rep(G)$-generator $m$, equipped with unitary ismorphisms
\[
  \sigma_U \colon U \stmryogt m \to m \stmryolt U, \quad (U \in \Rep(G)),
\]
which are natural in $U$ and satisfy a \emph{half-braided} condition:
\[
  \sigma_{U \otimes V} = (\sigma_U \stmryolt \id_V) \circ (\id_U \stmryogt \sigma_V).
\]
We call such categories \emph{centrally pointed cyclic} bimodule C$^*$-$\Rep(G)$-module categories.

We prove our main result in Section~\ref{sec:duality-for-YD-algs}, giving the equivalence between the category of continuous unital Yetter--Drinfeld C$^*$-algebras and the category of centrally pointed cyclic C$^*$-$\Rep(G)$-bimodule categories.
It is based on the following observation.
Suppose $B$ is a Yetter--Drinfeld C$^*$-algebra.
The extra categorical structure we are looking for boils down to having a $G$-equivariant $B$-bimodule structure on the $G$-equivariant right $B$-module $H_U \otimes B$, where $H_U$ is the underlying finite-dimensional Hilbert space for an object $U$ in $\Rep(G)$.
Let us write the left action $\beta\colon \cO(G) \odot B \to B$ of the regular subalgebra $\cO(G) \subset C(G)$ as $\beta(x \odot b) = x \triangleright b$, and write $U$ as
\[
  U = \sum_{ij} m^U_{ij} \otimes u_{ij} \in \cB(H_U) \otimes C(G).
\]
We can define a left $B$-action $\pi_U\colon B \to \End_B(H_U \otimes B)$ on $H_U \otimes B$ by
\[
  \pi_U(b) = \sum_{ij} m^U_{ij} \otimes (u_{ij} \triangleright b).
\]
In~\cite{NY3}, the braided-commutativity assumption was used to give an alternative description of this action, allowing the proof that it indeed defines a $G$-equivariant $*$-homomorphism.
In Proposition~\ref{prop:equivariancecanonicalleftaction}, we show that braided-commutativity is not needed, proving that $\pi_U$ is $G$-equivariant using only the relation between the coaction of $\cO(G)$ and $\beta$ fitting in a Yetter--Drinfeld module.

\bigskip
\paragraph{Acknowledgements}
We would like to thank Sergey Neshveyev for discussions during the development of this work, and the anonymous reviewers for their helpful comments which helped us improve the presentation of the paper. For the purpose of Open Access, the authors have applied a CC BY public copyright licence to any Author Accepted Manuscript (AAM) version arising from this submission.

\section{Preliminaries}\label{sec:preliminaries}

\subsection{\texorpdfstring{C$^*$}{C*}-tensor categories}

Here we mostly follow~\citelist{\cite{MR3204665}\cite{NY3}}.
Given a \emph{C$^*$-category} $\cC$, we denote the space of morphisms from an object $X$ to another $Y$ as $\cC(X, Y)$.
The involution $\cC(X, Y) \to \cC(Y, X)$ is denoted by $T \mapsto T^*$, and the norm is by $\norm{T}$, so that we have the C$^*$-identity $\norm{T^* T} = \norm{T}^2$.
We also assume that $\cC$ has finite direct sums.
Given $X,Y \in \cC$, their direct sum is denoted by $X \oplus Y$ and there are isometries $\iota_X\colon X \to X \oplus Y \leftarrow Y \lon \iota_Y$ such that $\iota_X \circ \iota_X^* + \iota_Y \circ \iota_Y^* = \id_{X \oplus Y}$.
Under these assumptions, $\cC(X, Y)$ is naturally a right Hilbert module over the unital C$^*$-algebra $\cC(X) = \cC(X, X)$.
We tacitly assume that $\cC$ is closed under taking subobjects, i.e., any projection in $\cC(X)$ corresponds to a direct summand of $X$.

A \emph{C$^*$-tensor category} is a C$^*$-category endowed with monoidal structure given by a $*$-bifunctor $\otimes\colon \cC \times \cC \to \cC$, a unit object $1_\cC$, and unitary natural isomorphisms
\begin{equation*}
  1_\cC \otimes U \to U \leftarrow U \otimes 1_\cC, \qquad \Phi\colon (U \otimes V) \otimes W \to U \otimes (V \otimes W)
\end{equation*}
for $U, V, W \in \cC$, satisfying a standard set of axioms.
Without losing generality, we may and do assume that $\cC$ is \emph{strict} so that the above morphisms are identity, and that $1_\cC$ is simple, unless explicitly stated otherwise.

A \emph{rigid} C$^*$-tensor category is a C$^*$-tensor category where any object $U$ has a dual given by: an object $\bar U$ and morphisms
\begin{equation*}
  R \colon 1_\cC \to \bar U \otimes U, \qquad \bar R \colon 1_\cC \to U \otimes \bar U
\end{equation*}
satisfying the conjugate equations for $U$.

When $\cC$ is a C$^*$-tensor category, a \emph{right (C$^*$-)$\cC$-module category} is given by a C$^*$-category $\cM$, together with a $*$-bifunctor $\stmryolt \colon \cM \times \cC \to \cM$ and unitary natural isomorphisms
\begin{equation*}
  X \stmryolt 1_\cC  \to X, \qquad \Psi\colon (X \stmryolt U) \stmryolt V \to X \stmryolt (U \otimes V)
\end{equation*}
for $X \in \cM$ and $U, V \in \cC$, satisfying standard set of axioms.
Again we may and do assume that module category structures are strict so that the above morphisms are identities.

A \emph{functor of right $\cC$-module categories} is given by a functor $F\colon \cM \to \cM'$ of the underlying linear categories, together with natural isomorphisms
\[
  F_2 = F_{2;m,U}\colon F(m) \stmryolt U \to F(m \stmryolt U) \quad (m \in \cM, U \in \cC)
\]
satisfying the standard compatibility conditions with structure morphisms of $\cM$ and $\cM'$.
If $\cM$ and $\cM'$ are C$^*$-$\cC$-module categories, a functor $(F,F_2)$ as above is a said to be a \emph{functor of right C$^*$-$\cC$-module categories} if it is a $*$-functor and the natural isomorphism $F_2$ is unitary.

\subsection{Quantum groups}

Again we mostly follow~\citelist{\cite{MR3204665}\cite{NY3}}.
A \emph{compact quantum group} $G$ is given by a unital C$^*$-algebra $C(G)$ and a $*$-homomorphism $\Delta\colon C(G) \to C(G) \otimes C(G)$ satisfying the coassociativity and cancellation.
We denote its invariant state (\emph{Haar state}) by $h$, and always assume that $h$ is faithful on $C(G)$.
The GNS Hilbert space of $(C(G), h)$ is denoted by $L^2(G)$.

A \emph{finite dimensional unitary representation} of $G$ is given by a finite dimensional Hilbert space $H$ and a unitary element $U \in \cB(H) \otimes C(G)$ satisfying
\begin{equation}\label{eq:fin-dim-unitar-rep}
  U_{1 2} U_{1 3} = (\id \otimes \Delta)(U).
\end{equation}
We often write $H = H_U$.
These objects form a rigid C$^*$-tensor category that we denote by $\Rep(G)$, with its tensor unit represented by the multiplicative unit $1 \in C(G) \simeq \cB(\C) \otimes C(G)$.
Our convention of duality is given by $H_{\bar U} = \overline{H_U}$, with structure morphisms $R_U\colon 1 \to \bar U \otimes U$ and $\bar R_U \colon 1 \to U \otimes \bar U$ given by
\begin{align}\label{eq:duality-mors}
  R_U & = \sum_i \bar \xi_i \otimes \rho_U^{-1/2} \xi_i, & \bar R_U & = \sum_i \rho_U^{1/2} \xi_i \otimes \bar \xi_i
\end{align}
for any choice of orthonormal basis $(\xi_i)_i \subset H_U$.
Here $\rho_U$ is the positive operator on $H_U$ defined by
\[
  \rho_U = (\id \otimes f_1)(U),
\]
where $f_1$ is a special value of the Woronowicz characters $(f_z)_{z \in \C}$.

We fix an index set for the set of isomorphism classes of irreducible representations of $G$, and denote it by $\Irr(G)$.
Let $(U_i)_{i \in \Irr(G)}$ by be a choice of representatives of finite dimensional irreducible unitary representations, with underlying Hilbert spaces $(H_i)_i$.
For each $i \in \Irr(G)$ we choose a complete set of matrix units $(m^{(i)}_{kl})_{k,l}$ in $\cB(H_i)$ and write
\[
  U_i = \sum_{k,l} u^{(i)}_{kl} \otimes m^{(i)}_{kl}.
\]

As the model of $c_0$-sequences on $\hat G$, the discrete dual of $G$, we take the C$^*$-algebra
\[
  C^*_r(G) = c_0(\hat G) = c_0\mhyph \smashoperator[l]{\bigoplus_{i \in \Irr(G)}} \cB(H_i),
\]
where $c_0\mhyph\bigoplus$ denotes the C$^*$-algebraic direct sum.

As usual, without finite dimensionality assumption on $H$, an unitary representation of $G$ is given by an unitary element $U \in \cM(\cK(H) \otimes C(G))$ satisfying~\eqref{eq:fin-dim-unitar-rep}, where we extend $\id \otimes \Delta$ to a unital $*$-homomorphism $\cM(\cK(H) \otimes C(G)) \to \cM(\cK(H) \otimes C(G) \otimes C(G))$.
In particular, there is a distinguished representation (the \emph{regular representation}) of $G$ on $L^2(G)$, given by the \emph{multiplicative unitary} $W_G \in \cM(c_0(\hat G) \otimes C(G)) \subset \cM(\cK(L^2(G)) \otimes C(G))$.

Purely algebraic models of functions on these quantum groups are given as follows.
The \emph{regular subalgebra} of $C(G)$, or the algebra of \emph{matrix coefficients}, is denoted by $\cO(G)$, which is spanned by the elements $(\omega \otimes \id)(U)$ for $U \in \Rep(G)$ and $\omega \in \cB(H_U)_*$.
On the dual side, we take
\[
  c_{c}(\hat G) = \bigoplus_{i \in \Irr(G)} \cB(H_i)
\]
which is the usual algebraic direct sum.
There is a nondegenerate linear duality pairing between $\cO(G)$ and $c_c(\hat G)$, so that the algebra structures and the coalgebra structures on the spaces are related by
\begin{align}\label{eq:dual-hopf-alg-pairing}
  (\phi_{(1)}, f_1) (\phi_{(2)}, f_2) & = (\phi, f_1 f_2), & (\phi_1 \phi_2, f) & = (\phi_1, f_{(1)}) (\phi_2, f_{(2)}).
\end{align}

The \emph{Drinfeld double} of $G$ is represented by $\cO_c(\hat D(G))$, which is modeled on the tensor product coalgebra $c_c(\hat G) \otimes \cO(G)$ endowed with the coproduct $\Delta(\omega a) = \omega_{(1)} a_{(1)} \otimes \omega_{(2)} a_{(2)}$, and the $*$-algebra structure induced by those of $\cO(G)$ and $c_c(\hat G)$ together with the exchange rule
\[
  (a_{(1)} \rhd \omega) a_{(2)} = a_{(1)} (\omega \lhd a_{(2)}) \quad (\omega \in c_c(\hat G), a \in \cO(G)),
\]
with $a \rhd \omega$, $\omega \lhd a \in c_c(\hat{G})$ given by $(a \rhd \omega)(b) = \omega(b a)$ and $(\omega \lhd a)(b) = \omega(a b)$.
To be precise, this is to be interpreted as an algebraic model of convolution algebra of functions on the Drinfeld double quantum group $D(G)$.
Similarly, the function algebra on $D(G)$ is given by the tensor product $*$-algebra $\cO_c(D(G)) = \cO(G) \otimes c_c(\hat G)$ together with the twisted coproduct induced by the adjoint action of $W_G$, see~\cite{MR2566309}.
These algebras have completion as C$^*$-algebraic models for locally compact quantum groups, but we refrain from working with them.

\subsection{Quantum group actions}

A \emph{continuous action} of $G$ on a C$^*$-algebra $A$ is given by a nondegenerate and injective $*$-homomorphism $\alpha\colon A \to C(G) \otimes A$ satisfying $(\Delta \otimes \id) \alpha = (\id \otimes \alpha) \alpha$.
We say that $A$ is a \emph{$G$-C$^*$-algebra}.

Given a $G$-C$^*$-algebra $(A, \alpha)$, its \emph{regular subalgebra} $\cA$ is defined as the set of elements $a \in A$ such that $\alpha(a)$ belongs to the algebraic tensor product $\cO(G) \otimes A$.
This is a left $\cO(G)$-comodule algebra.

\begin{example}\label{ex:adj-action}
Let $(H, U)$ be a finite dimensional unitary representation of $G$.
Under our convention, $H$ is a right $\cO(G)$-comodule, but it can be considered as a left comodule by the coaction map $\xi \mapsto U_{21}^* (1 \otimes \xi)$.
This becomes an equivariant right Hilbert $\C$-module by the inner product
\[
  \langle \xi, \eta \rangle_\C = (U_1 \eta, U_{1'} \xi)_H h(U_{2'}^* U_2) = (\rho_U^{-1} \eta, \xi)_H
\]
where $(\eta, \xi)_H$ denotes the original inner product on $H$, see~\cite{MR3933035}.
Then $\cB(H)$ admits the induced $G$-C$^*$-algebra structure, concretely given by the coaction $T \mapsto U^*_{2 1} T_2 U_{2 1}$.
When $(H, U)$ is an infinite dimensional unitary representation, the same formula makes $\cB(H)$ a $G$-von Neumann algebra, and we also obtain a continuous a $G$-C$^*$-algebra $\cR(\cB(H))$, defined as
\[ 
\cR(\cB(H)) := \overline{ \{T \in \cB(H) \ | \ U^*_{2 1} T_2 U_{2 1} \in \cO(G) \odot \cB(H) \}} \ ,
\]
called the {\em regular subalgebra} of $\cB(H)$ (the closure is taken with respect to the norm topology).

\end{example}

A \emph{Yetter--Drinfeld $G$-C$^*$-algebra} is given by a $G$-C$^*$-algebra $(A, \alpha)$ together with a left action
\[
  \cO(G) \otimes \cA \to \cA, \quad f \otimes a \mapsto f \rhd a,
\]
satisfying
\begin{align}\label{eq:O(G)-moduleCstaralgebra}
  f \triangleright a^* = (S(f)^* \triangleright a)^*,
\end{align}
and such that
\[
  \alpha(f \rhd a) = f_{(1)} a_{(1)} S(f_{(3)}) \otimes (f_{(2)} \rhd a_{(2)}).
\]
Equivalently, it can be interpreted as a continuous action of the locally compact quantum group $D(G)$.
Because of this, we also say \emph{$D(G)$-equivariance} instead of \emph{Yetter--Drinfeld $G$-equivariance}.

Let $A$ be a left $\cO(G)$-module algebra.
By duality, we have a homomorphism
\[
  \beta_A\colon A \to \smashoperator[l]{\prod_{i \in \Irr(G)}} A \otimes \cB(H_i), \quad a \mapsto \biggl(\sum_{k,l} (u^{(i)}_{kl} \triangleright a) \otimes m^{(i)}_{kl}\biggr)_i,
\]
which can be regarded as a right comodule algebra over the multiplier Hopf algebra representing $\hat G$ with the convention of~\eqref{eq:dual-hopf-alg-pairing}.
If $A$ is a left $\cO(G)$-module C$^*$-algebra, meaning (\ref{eq:O(G)-moduleCstaralgebra}) holds, then $\beta_A$ is a $*$-homomorphism from $A$ to the multiplier C$^*$-algebra
\[
  \cM(A \otimes c_0(\hat{G})) \simeq \ell^{\infty} \mhyph \smashoperator[l]{\prod_{i \in \Irr(G)}} A \otimes \cB(H_i) .
\]

\medskip
By an analogue of the Tannaka--Krein--Woronowicz duality, we have an equivalence between the category of unital $G$-C$^*$-algebras and the category of pointed cyclic right C$^*$-$\Rep(G)$-module categories~\citelist{\cite{ostrik03}\cite{ydk1}\cite{n1}}.

Concretely, given a unital $G$-C$^*$-algebra $B$, one takes the category $\cD_B$ of finitely generated projective $G$-equivariant right Hilbert modules over $B$.
Thus, an object of $\cD_B$ is a right Hilbert module $E_B$ with a left coaction  $\delta$ of $C(G)$, such that the action of $B$ is equivariant.
Given an object $U$ of $\Rep(G)$, its right action $E_B \stmryolt U$ is represented by the equivariant right Hilbert module $H_U \otimes E_B$, where the underlying left comodule is given as the tensor product of $E_B$ and the left comodule $H_U$ as explained in Example~\ref{ex:adj-action}.
Explicitly, the new coaction on $E_B \stmryolt U$ is given by
\[
  \xi \otimes a \mapsto \left( U_{21}^*(1 \otimes \xi \otimes 1)\right)\delta(a)_{13}.
\]
The distinguished object in $\cD_B$ is $B$ as a right Hilbert C$^*$-module over itself.

Conversely, given a right C$^*$-$\Rep(G)$-module category $\cD$ and an object $X \in \cD$, we take the left $\cO(G)$-comodule
\[
  \cB_{\cD, X} = \bigoplus_{i \in \Irr(G)} \bar H_i \otimes \cD(X, X \stmryolt U_i),
\]
which admits an associative product from irreducible decomposition of monoidal products.
Together with the involution coming from duality of representations, we obtain a pre C$^*$-algebra which admits a canonical completion supporting a coaction of $C(G)$.

\begin{remark}\label{rem:leftcoactionsandrightmodules}
The formula above explains why left $C(G)$-comodule structures give rise to right $\Rep(G)$-module categories.
Indeed, given two finite dimensional unitary representations $(H_U, U)$ and $(H_V,V)$ of $G$, and given $\xi \in H_U$, $\eta \in H_V$ and $a \in E_B$, the $C(G)$-coaction on $\xi \otimes \eta \otimes a \in (H_V \otimes E_B) \stmryolt U$ is given by
\[
  \xi \otimes \eta \otimes a \mapsto U_{21}^* V_{31}^* (a_{(1)} \otimes \xi \otimes \eta \otimes a_{(2)}),
\]
where $a_{(1)} \otimes a_{(2)} = \delta(a)$.
Flipping the second and third legs, we obtain
\[
  U_{21}^* V_{31}^* ( a_{(1)} \otimes \xi \otimes \eta \otimes a_{(2)}) \mapsto (V \otimes U)_{21}^* (a_{(1)} \otimes (\eta \otimes \xi) \otimes a_{(2)}).
\]
This computation shows that the flip map $H_U \otimes H_V \to H_V \otimes H_U$ induces an equivariant isomorphism $(H_V \otimes E_B) \stmryolt U \simeq H_{V \otimes U} \otimes E_B$.
\end{remark}

The generalization of module categories to the nonunital setting is given by multiplier module categories~\cite{AV1}.

\section{Centrally pointed bimodule categories}\label{sec:centr-br-bimod-cat}

Fix a rigid C$^*$-tensor category $\cC$.
Then a \emph{$\cC$-bimodule category} $\cM$ is a category $\cM$ equipped with both a left and right $\cC$-module structures and with three sets of module associativity morphisms
\begin{gather*}
  \Psi'\colon (V \otimes W) \stmryogt X \to V \stmryogt (W \stmryogt X), \qquad \Psi\colon (X \stmryolt V) \stmryolt W \to X \stmryolt (V \otimes W) \\
  \Psi''\colon (V \stmryogt X) \stmryolt W \to V \stmryogt (X \stmryolt W)
\end{gather*}
for $X \in \cM$ and $V, W \in \cC$, satisfying the pentagon-type equations with the associativity morphism of $\cC$.
A \emph{$\cC$-bimodule functor} $F \colon \cM \to \cM'$ is given by a linear functor of underlying categories, together with
\begin{equation*}
  \tensor[_2]F{_{V,X}}\colon V \stmryogt F(X) \to F(V \stmryogt X), \qquad F_{2;X,V}\colon F(X) \stmryolt V \to F(X \stmryolt V),
\end{equation*}
again compatible with structure morphisms of $\cM$ and $\cM'$.

In the above, if the $\cM$ is both a left and a right C$^*$-$\cC$-module category, we say that it is C$^*$-$\cC$-bimodule if the module associative morphisms $\Psi,\Psi'$ and $\Psi''$ are unitaries.
If $\cM$ and $\cM'$ are C$^*$-$\cC$-bimodule categories, a $\cC$-bimodule functor $F\colon\cM \to \cM'$ a a morphism of C$^*$-$\cC$-bimodule categories if it is $*$-preserving and if $\tensor[_2]F{}$ and $F_2$ are unitaries.
We have thus defined a category of C$^*$-$\cC$-bimodule categories.

Alternatively, a C$^*$-$\cC$-bimodule category is nothing but a right C$^*$-$(\cC^{mp} \boxtimes \cC)$-module category, where $\cC^{mp}$ denotes the monoidal opposite of $\cC$ and $\boxtimes$ is the Kelly-Deligne tensor product.
Observe that $\cC^{mp} \boxtimes \cC$ is a rigid C$^*$-tensor category if $\cC$ is, and has simple unit if the unit of $\cC$ is simple.

To simplify our presentation we further assume that $\cC$ is strict.

\begin{definition}
Let $\cM$ be a C$^*$-$\cC$-bimodule category, and $m \in \cM$.
We say that $m$ is \emph{central} if there is a family of unitary morphisms
\[
  \sigma_U \colon U \stmryogt m \to m \stmryolt U
\]
that is natural in $U$ and satisfy the braid relations
\[
  \sigma_{U \otimes V} = (\sigma_U \stmryolt \id_V)(\id_U \stmryogt \sigma_V),
\]
for all $U,V \in \cC$.
We call $(\cM, m)$ a \emph{centrally pointed bimodule category}.
We call it a \emph{centrally cyclic bimodule category} when furthermore $m$ is a generator of $\cM$ under the action of $\cC^{mp} \boxtimes \cC$.
\end{definition}

\begin{remark}
In the above definition, because of centrality, $m$ is a generator of $\cM$ as a bimodule category if and only if it generates $\cM$ under either the right or left action of $\cC$ on $\cM$.
\end{remark}

\begin{definition}
Let $(\cM, m)$ and $(\cM', m')$ be centrally pointed bimodule categories.
A \emph{central functor} between them is a bimodule functor $F \colon \cM \to \cM'$ endowed with an isomorphism $F_0 \colon m' \to F(m)$ which are compatible with central generators and structure morphisms, in the sense that the diagram
\begin{equation}\label{eq:centr-br-ftr}
  \begin{tikzcd}
    U \stmryogt m' \arrow[r, "\sigma'_U"] \arrow[d] & m' \stmryolt U \arrow[d]\\
    U \stmryogt F(m) \arrow[d,"{\tensor[_2]{F}{}}"'] & F(m) \stmryolt U \arrow[d,"F_2"]\\
    F(U \stmryogt m) \arrow[r,"F(\sigma_U)"] & F(m \stmryolt U)
  \end{tikzcd}
\end{equation}
is commutative.
If $\cM$ and $\cM'$ are C$^*$-$\cC$-bimodule categories, $F$ is $*$-preserving and $F_0$ is unitary, we say that $F$ is a functor of central C$^*$-$\cC$-bimodule categories or that it is a bimodule central bimodule $*$-functor.
\end{definition}

\begin{definition}\label{def:nat-trans-bimod-functors}
Let $F$ and $F'$ be central functors from $(\cM, m)$ to $(\cM, m')$.
A \emph{natural transformation of bimodule functors} $\alpha\colon F \to F'$ is given by a natural transformation of the underlying linear functors $\alpha_X \colon F(X) \to F'(X)$ for $X \in \cM$ such that the diagrams
\[
  \begin{tikzcd}
    U \stmryogt F(X) \arrow[r, "{\tensor[_2]{F}{}}"] \arrow[d, "\id_U \otimes \alpha_X"'] & F(U \stmryogt X) \arrow[d, "\alpha_{U \stmryogt X}"]\\
    U \stmryogt F'(X) \arrow[r, "{\tensor[_2]{F}{^\prime}}"] & F'(U \stmryogt X)
  \end{tikzcd}
  \quad
  \begin{tikzcd}
    F(X) \stmryolt U \arrow[r, "F_2"] \arrow[d, "\alpha_X \otimes \id_U"'] & F(X \stmryolt U) \arrow[d, "\alpha_{X \stmryolt U}"]\\
    F'(X) \stmryolt U \arrow[r, "F'_2"] & F'(X \stmryolt U)
  \end{tikzcd}
  \quad
  \begin{tikzcd}
    m' \arrow[r, "F_0"] \arrow[dr, "F'_0"'] & F(m) \arrow[d, "\alpha_m"]\\
    & F'(m)
  \end{tikzcd}
\]
commute.
\end{definition}

\begin{definition}\label{def:centrallypointedcyclicbimodules}
Denote by $\CB(\cC)$ the category of centrally pointed C$^*$-$\cC$-bimodule categories and central bimodule $*$-functors, and by $\cCB(\cC)$ the subcategory of cyclic centrally pointed C$^*$-$\cC$-bimodule categories.
\end{definition}

Given a central object as above, we often work with the induced maps $\Sigma_{U;V,W}$ from $\cM(m \stmryolt V,m \stmryolt W)$ to $\cM(m \stmryolt U \otimes V, m \stmryolt U \otimes W)$, defined by
\begin{equation}\label{eq:br-map}
  \Sigma_{U;V,W}(T) = (\sigma_U \stmryolt \id_W) (\id_U \stmryogt T) (\sigma_U^* \stmryolt \id_V).
\end{equation}

\begin{proposition}\label{prop:C-embeds-into-half-braided-mod-cat}
Let $(\cM, m)$ a centrally pointed bimodule category.
There is a embedding of centrally pointed bimodule categories $F \colon (\cC,1_\cC) \to (\cM,m)$ which sends $U$ to $m \stmryolt U$.
\end{proposition}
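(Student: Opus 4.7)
The plan is to define $F\colon \cC \to \cM$ on objects by $F(U) = m \stmryolt U$ and on morphisms by $F(f) = \id_m \stmryolt f$, and then promote it to a central C$^*$-bimodule functor using the half-braiding $\sigma$. Since $\stmryolt$ is a $*$-bifunctor, $F$ is automatically a $*$-functor. For the unit isomorphism, take $F_0\colon m \to F(1_\cC) = m \stmryolt 1_\cC$ to be the inverse of the right unit constraint of $\cM$.

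For the right $\cC$-module structure, set $F_{2;U,V} = \Psi_{m,U,V}\colon (m \stmryolt U) \stmryolt V \to m \stmryolt (U \otimes V)$, the right associator of $\cM$; its compatibility with the (strict) associator of $\cC$ and with the unit constraints follows from the pentagon and triangle axioms of $\cM$. For the left $\cC$-module structure, define
\[
  \tensor[_2]F{_{V,U}}\colon V \stmryogt F(U) \xrightarrow{(\Psi'')^{-1}_{V,m,U}} (V \stmryogt m) \stmryolt U \xrightarrow{\sigma_V \stmryolt \id_U} (m \stmryolt V) \stmryolt U \xrightarrow{\Psi_{m,V,U}} F(V \otimes U),
\]
which is a unitary natural isomorphism since each factor is.

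It remains to verify three coherences: (a) compatibility of $\tensor[_2]F{}$ with the left associator $\Psi'$; (b) the mixed bimodule compatibility of $F_2$ and $\tensor[_2]F{}$ through $\Psi''$; and (c) the central functor diagram~\eqref{eq:centr-br-ftr}. For (a), substitute the definition of $\tensor[_2]F{_{V \otimes W, U}}$, apply the half-braided relation $\sigma_{V \otimes W} = (\sigma_V \stmryolt \id_W)(\id_V \stmryogt \sigma_W)$ to split the central component, and use the pentagon-type identities in $\cM$ relating $\Psi'$, $\Psi''$, and $\Psi$ to reassemble $\tensor[_2]F{_{V, W \otimes U}} \circ (\id_V \stmryogt \tensor[_2]F{_{W,U}}) \circ \Psi'_{V,W,F(U)}$. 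For (b), unfolding the diagram on $V \stmryogt F(U) \stmryolt W$ yields an identity involving two instances of $\Psi''$, one of $\Psi$, and the naturality of $\sigma_V$ with respect to the right $\cC$-action. For (c), since the half-braiding on $1_\cC$ viewed as the central generator of $\cC$ over itself is the identity, the diagram reduces to $F_{2;1_\cC,U} \circ (F_0 \stmryolt \id_U) \circ \sigma_U = \tensor[_2]F{_{U,1_\cC}} \circ (\id_U \stmryogt F_0)$; after substituting the definitions and invoking the triangle coherences in $\cM$ together with the naturality of the right unit constraint, both sides collapse to $\sigma_U\colon U \stmryogt m \to m \stmryolt U$.

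The main obstacle is step (a): this is the only place where the half-braided relation for $\sigma$ is essentially used, and it requires a pentagon chase in $\cM$ involving all three bimodule associators together with the splitting of $\sigma_{V \otimes W}$. Steps (b) and (c) are routine once the coherences of $\cM$ and the naturality of $\sigma$ are invoked.
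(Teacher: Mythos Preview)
Your proposal is correct and follows essentially the same approach as the paper: define $F(U)=m\stmryolt U$, take the right-module structure $F_2$ to be the right associator, and build the left-module structure $\tensor[_2]F{}$ from $\sigma_V\stmryolt\id_U$, with the key coherence (your step~(a)) reducing to the half-braid relation for $\sigma$. The only difference is that the paper immediately invokes strictness of the right $\cC$-action on $\cM$ (so $F_2=\id$, $F_0=\id$, and $\tensor[_2]F{_{V,U}}=\sigma_V\stmryolt\id_U$ on the nose), which collapses your pentagon chases to one line; your version simply carries the associators $\Psi,\Psi',\Psi''$ explicitly.
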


\begin{proof}
For simplicity we assume that the right action of $\cC$ on $\cM$ is strict.
Then we get a right $\cC$-module functor $F(U) = m \stmryolt U$ with $F_2 = \id_{m \stmryolt U}$.
We extend it to a bimodule functor by setting
\[
  {\tensor[_2]{F}{}} = \sigma_U \otimes \id_V\colon (U \stmryogt m) \stmryolt V \to m \stmryolt U \stmryolt V = m \stmryolt (U \otimes V).
\]
Then consistency conditions of ${\tensor[_2]{F}{}}$ follow from the braid relations for $\sigma$.
\end{proof}

\begin{definition}\label{def:C-hat-U}
Let $\cM$ be a right C$^*$-$\cC$-module category, and $m \in \cM$.
We denote by $\hat{\cM}_m$ the idempotent completion of $\cC$ with the enlarged morphism sets
\begin{equation}\label{eq:hat-M-morphism-set}
  \hat{\cM}_m(V,W) = \Nat_b ( m \stmryolt \iota \otimes V, m \stmryolt \iota \otimes W) \simeq \ell^\infty\mhyph\smashoperator[l]{\prod_{X \in \Irr(\cC)}} \cM(m \stmryolt X \otimes V, m \stmryolt X \otimes W).
\end{equation}
\end{definition}

This is a bimodule category, with $1$ (which corresponds to $m \in \cM$) being a central generator.
The bimodule structure of $\hat\cM_m$ is as follows.
At the level of the objects it is induced by the tensor structure of $\cC$ At the level of morphisms, for $T \in \hat\cM_m(V,W)$, $X,Y,Z \in \cC$ and $\phi \in \cC(X,Y)$, we define the $Z$-component of the right action $T \stmryolt  \phi$ of $\phi$ on $T$ by
\[
  (T \stmryolt  \phi)_Z = T_Z \stmryolt  \phi .
\]
On the right hand side of the above equation we have used the module structure of $\cM$.
The left action $\phi \stmryogt T$, on the other hand, has the $Z$ component defined by the commutative diagram
\begin{equation*}
  \begin{tikzcd}[row sep=small, column sep=small] m \stmryolt  Z \triangleleft(X \otimes V) \arrow[rrrr, "(\phi \stmryogt T)_Z"] \arrow[dd] &  &  &  & m \stmryolt  Z \stmryolt  (Y \otimes W) \\
  &  &  &  & \\
  m \stmryolt  (Z \otimes X) \stmryolt  V \arrow[rr, "T_{Z \otimes X}"'] &  & m \stmryolt  (Z \otimes X) \stmryolt  W \arrow[rr] &  & m \stmryolt  Z \stmryolt  (X \otimes W). \arrow[uu, "\id_{m \stmryolt  Z} \stmryolt  (\phi \otimes \id_W)"']
  \end{tikzcd}
\end{equation*}
Using the naturality of $T$, it is easy to check that these indeed define a $\cC$-bimodule structure on $\hat{\cM}_m$.
It is moreover compatible with the $*$-structure, so that $\hat{\cM}_m$ has a canonical structure of a C$^*$-$\cC$-bimodule category.
It is immediate that $1 \in \hat\cM_m$ is a central object, with $\sigma_U$ given by $\id_U$.

This is motivated by the `dual category' $\hat\cC$ introduced in~\cite{NY1}, which corresponds to case of $\cM = \cC$ and $m = 1_\cC$.
In this case there is a natural C$^*$-tensor structure on $\hat\cC$ (with nonsimple unit) such that $U \mapsto U$ is a C$^*$-tensor functor from $\cC$ to $\hat\cC$.
If we take $m = U$ instead, because of the centrality, the resulting category can be identified with $\hat\cC_U$, with morphism sets
\[
  \hat\cC_U(V, W) = \hat\cC(U\otimes V,U \otimes W) = \ell^\infty\mhyph\smashoperator[l]{\prod_{i \in \Irr(\cC)}} \cC(U_i \otimes U \otimes V, U_i \otimes U \otimes W)
\]
for $V, W \in \cC$.
In terms of compact quantum group actions, we have the following.

\begin{proposition}
Suppose $\cC = \Rep(G)$ for some compact quantum group $G$.
For a finite dimensional unitary representation $(H, U)$ of $G$, the underlying right C$^*$-$\Rep(G)$-module category of $\hat{\cC}_U$ corresponds to the $G$-C$^*$-algebra $\cR(\cB(H) \vnotimes \ell^\infty(\hat{G}))$, the regular part of the $G$-W$^*$-algebra $\cB(H) \vnotimes \ell^{\infty}(\hat{G})$.
\end{proposition}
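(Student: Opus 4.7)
The plan is to verify the correspondence by computing the $G$-C$^*$-algebra associated to $(\hat{\cC}_U, 1)$ via the Tannaka--Krein recipe of Section~\ref{sec:preliminaries} and matching it against $\cR(\cB(H) \vnotimes \ell^\infty(\hat G))$. The recipe produces the pre-C$^*$-algebra
\[
\cB_0 = \bigoplus_{k \in \Irr(G)} \bar{H}_k \otimes \hat{\cC}_U(1, U_k) = \bigoplus_k \bar{H}_k \otimes \ell^\infty\mhyph\prod_j \cC(U_j \otimes U, U_j \otimes U \otimes U_k),
\]
whose completion is the sought $G$-C$^*$-algebra. I would construct an explicit $G$-equivariant $*$-isomorphism from $\cB_0$ onto the algebraic regular subalgebra of $\cB(H) \vnotimes \ell^\infty(\hat G) = \prod_j \cB(H_j \otimes H)$, after which the statement follows by taking norm closures on both sides.

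The candidate map $\Phi$ sends $\bar\xi \otimes T$, with $\bar\xi \in \bar{H}_k$ and $T = (T_j)_j$ having $T_j \in \Hom_G(H_j \otimes H, H_j \otimes H \otimes H_k)$, to the element of $\prod_j \cB(H_j \otimes H)$ whose $j$-th component is $(\id_{H_j \otimes H} \otimes \bar\xi^\flat)\,T_j$, where $\bar\xi^\flat \in H_k^*$ corresponds to $\bar\xi$ under the natural antilinear isomorphism. Using the intertwining property of each $T_j$, the adjoint coaction of $U_j \otimes U$ on $\Phi(\bar\xi \otimes T)_j$ rewrites as a sum of elements of the form $\Phi(\bar\eta_l \otimes T)_j$ paired against matrix coefficients of $U_k$, so $\Phi(\bar\xi \otimes T)$ lies in the $k$-th spectral subspace of $\cR(\cB(H) \vnotimes \ell^\infty(\hat G))$. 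Bijectivity onto the algebraic regular subalgebra would then be a factorwise application of Schur orthogonality: since each $\cB(H_j \otimes H)$ is finite-dimensional, its $U_k$-isotypic component under the adjoint action of $U_j \otimes U$ has multiplicity space canonically identified, via Frobenius reciprocity, with $\Hom_G(H_j \otimes H, H_j \otimes H \otimes H_k)$, and this is precisely the identification effected by $\Phi$.

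What remains is to verify that $\Phi$ is a $G$-equivariant $*$-homomorphism. For the product, the multiplication on $\cB_0$ assigns to $(\bar\xi \otimes T) \cdot (\bar\eta \otimes T')$ the element obtained by composing $(T \stmryolt U_l) \circ T' \in \hat{\cC}_U(1, U_k \otimes U_l)$ and decomposing along the isotypic pieces of $U_k \otimes U_l$; at the level of $j$-components this is $((T_j \otimes \id_{H_l}) T'_j)$ contracted against $\bar\xi \otimes \bar\eta$, which matches the pointwise product $\Phi(\bar\xi \otimes T)_j \cdot \Phi(\bar\eta \otimes T')_j$ in $\cB(H_j \otimes H)$ by the $G$-equivariance of the contraction maps. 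The involution on $\cB_0$, defined via the duality morphisms $R_{U_k}, \bar R_{U_k}$ of~\eqref{eq:duality-mors}, becomes the ordinary operator adjoint on the algebra side once the Woronowicz operator $\rho_{U_k}$ is absorbed into the pairing between $\bar H_k$ and $H_k$, and $G$-equivariance follows by direct comparison of coactions. The main obstacle is precisely this bookkeeping: tracking the $\rho_{U_k}$ factors, the order of tensor products implicit in the bimodule structure on $\hat{\cC}_U$, and the compatibility of coactions, all at once. Once the conventions are pinned down the two $*$-algebraic structures match, and the identification extends by continuity to the advertised $G$-C$^*$-algebra isomorphism.
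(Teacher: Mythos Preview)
The paper states this proposition without proof, so there is no argument to compare against. Your approach---run the Tannaka--Krein reconstruction on $(\hat\cC_U,1)$ and identify the resulting $*$-algebra with the algebraic regular subalgebra of $\prod_j \cB(H_j\otimes H)$ via Frobenius reciprocity in each factor---is the natural one and is correct in outline.

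Two points worth tightening. First, a convention check: writing $\cB(H)\vnotimes\ell^\infty(\hat G)=\prod_j\cB(H\otimes H_j)$ with the tensor product of adjoint actions gives the adjoint action of $U\otimes U_j$, whereas your factorwise computation uses $U_j\otimes U$ on $\cB(H_j\otimes H)$; the flip $H\otimes H_j\to H_j\otimes H$ is a $G$-equivariant isomorphism between these, so this is harmless, but it should be said explicitly rather than absorbed into ``conventions''. Second, the passage to norm closures is not automatic from a $*$-isomorphism of dense $*$-subalgebras: you need that $\Phi$ is isometric for the C$^*$-norm used to complete $\cB_0$ in~\cite{n1}. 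The cleanest way to handle this is to observe that the norm on $\cB_0$ is exactly the operator norm of its left regular representation on the right Hilbert $\cB_0$-module $\bigoplus_k \bar H_k\otimes\hat\cC_U(1,U_k)$, and that under $\Phi$ this representation is unitarily equivalent to left multiplication of the algebraic regular subalgebra on itself inside $\prod_j\cB(H_j\otimes H)$; alternatively, invoke the uniqueness half of the duality in~\cite{n1} after checking that $\Phi$ intertwines the module-category structures. Either way, one sentence suffices, but it should be there.
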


\subsection{Trivialization of central structure}

Let us consider a construction that we call \emph{trivialization of the central structure}.
Let $(\cM,m, \sigma)$ be a centrally pointed $\cC$-bimodule $C^*$-category.
Then, as usual, we consider the idempotent completion of $\cC$ with morphism sets $\Mor(V, W) = \cM(m \stmryolt V, m \stmryolt W)$.
In other words, our objects are projections $p$ in the $C^*$-algebras $\cM(m \stmryolt V)$.
Given such $p$, and another $U \in \cC$, consider the projection
\[
  \Sigma_{U;V}(p) = (\sigma_U \stmryolt \id_V) (\id_U \stmryogt p) (\sigma_U^{-1} \stmryolt \id_V) \in \cM(m \stmryolt U \otimes V),
\]
which defines a subobject of $U \otimes V$ in our category.
Of course, this is isomorphic to the left action of $U$ on the subobject of $m \stmryolt V$ defined by $p$, i.e., isomorphic to the subobject of $U \stmryogt m \stmryolt V$ defined by the projection $\id_U \stmryogt p$.
This way (assuming $\cC$ is strict) we get a strict bimodule category.
Moreover, $1$ is a central object, with the structure morphisms $\sigma'_U = \id_U$.

\begin{proposition}\label{prop:equiv-to-triv-braiding-cat}
The centrally pointed bimodule category $\tilde\cM$ obtained by trivialization of the central structure is isomorphic to the original one.
\end{proposition}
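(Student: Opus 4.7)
The plan is to exhibit an equivalence $F \colon \tilde\cM \to \cM$ of centrally pointed C$^*$-$\cC$-bimodule categories. On the generating objects $V \in \cC \subset \tilde\cM$ I would set $F(V) = m \stmryolt V$, and on morphisms I would take the tautological identification $\tilde\cM(V, W) = \cM(m \stmryolt V, m \stmryolt W)$; this extends canonically to the idempotent completion by sending $(V, p)$ to the direct summand of $m \stmryolt V$ cut out by $p$, which exists since $\cM$ is closed under subobjects. Full faithfulness is then tautological, and the essential image of $F$ is the full $\cC$-submodule of $\cM$ generated by $m$ under the right action; essential surjectivity thus holds whenever $m$ generates $\cM$ as a right $\cC$-module, in particular in the cyclic setting of Definition~\ref{def:centrallypointedcyclicbimodules}.

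For the bimodule structure of $F$, the right coherence $F_2$ should be the identity by strictness of $\cM$, and I would define the left coherence by
\[
  \tensor[_2]{F}{_{U,V}} = \sigma_U \stmryolt \id_V \colon U \stmryogt m \stmryolt V \longrightarrow m \stmryolt U \otimes V,
\]
which is unitary since $\sigma_U$ is. The pentagon for $\tensor[_2]{F}{}$ then unfolds precisely to the half-braided relation $\sigma_{U \otimes U'} = (\sigma_U \stmryolt \id_{U'})(\id_U \stmryogt \sigma_{U'})$, and compatibility with $F_2 = \id$ is automatic. Naturality of $\tensor[_2]{F}{}$ with respect to morphisms in $\tilde\cM$ should hold by design: the left action on morphisms of $\tilde\cM$ was defined in~\eqref{eq:br-map} as conjugation by $\sigma_U \stmryolt \id$, so intertwining this action with $\tensor[_2]{F}{} = \sigma_U \stmryolt \id$ recovers the genuine left action on morphisms of $\cM$.

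Finally, taking $F_0 = \id_m \colon m \to F(1) = m$, diagram~\eqref{eq:centr-br-ftr} should commute: the top arrow is the original central structure $\sigma_U$ on $\cM$, the bottom arrow is $F$ of the trivialised central structure on $\tilde\cM$ and hence the identity, $F_0$ and $F_2$ are identities, and $\tensor[_2]{F}{_{U,1}} = \sigma_U$, so both composite paths evaluate to $\sigma_U$. The conceptual heart of the argument is that the left $\cC$-action on $\tilde\cM$ was \emph{defined} by conjugation with $\sigma$, so every coherence condition for $F$ reduces either to the braid relation for $\sigma$ or to a tautology; the only substantive input is the generation of $\cM$ by $m$ under the right $\cC$-action, needed for essential surjectivity.
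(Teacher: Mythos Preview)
Your proposal is correct and takes essentially the same approach as the paper: the functor $F\colon \tilde\cM \to \cM$, $V \mapsto m \stmryolt V$ with $F_2 = \id$ and $\tensor[_2]{F}{_{U,V}} = \sigma_U \stmryolt \id_V$ is exactly the paper's $F_\sigma$, and your verification of~\eqref{eq:centr-br-ftr} matches. The only cosmetic difference is that the paper builds an explicit quasi-inverse by choosing isometries $j_X\colon X \to m \stmryolt V_X$, whereas you argue full faithfulness plus essential surjectivity directly; both rely on cyclicity of $m$ in the same way.
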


\begin{proof}
Define $F_\sigma\colon \tilde\cM \to \cM$ as (the canonical extension of) the functor $V \mapsto m \stmryolt V$ at the level of objects, and by the tautological map at the level of morphisms.
This is a right module functor.
The compatibility with left module structures,
\[
  \tensor[_2]{(F_\sigma)}{}\colon V \stmryogt m \stmryolt W = V \stmryogt F_\sigma(W) \to m \stmryolt V \stmryolt W = F_\sigma(V \otimes W),
\]
is given by $\sigma_V \otimes \id_W$.
Then the commutativity of~\eqref{eq:centr-br-ftr} holds by construction.

In the other direction, to define $F \colon \cM \to \tilde\cM$, we choose $V_X \in \cC$ and an isometry $j_X \colon X \to m \stmryolt V_X$ for each $X \in \cM$.
At the level of C$^*$-functor, for each $X \in \cM$ the object $F(X)$ is represented by the projection $j_X j_X^* \in \tilde\cM(V_X)$, while for each $T \in \cM(X, Y)$ the morphism $F'(T)$ is represented by $j_Y T j_X^* \in \tilde\cM(V_X, V_Y)$.

Then, we can define the natural isomorphisms giving compatibility with bimodule structure as follows.
For the right module structure, we take
\[
  F_2 = j_{X \stmryolt U} (j_X^* \stmryolt \id_U) \colon F'(X) \stmryolt U \to F(X \stmryolt U)
\]
while for the left module structure we take
\[
  \tensor[_2]{F}{^\prime} = j_{U \stmryogt X} (\id_U \stmryogt j_X^*) (\sigma_U^* \stmryolt \id_{V_X}) \colon U \stmryogt F(X) \to F(U \stmryogt X).
\]

Then the composition $F_\sigma F \colon \cM \to \cM$ is naturally isomorphic to the identity functor $\Id_\cM$.
Indeed, the assumption on cyclicity says that $\cM \ni X \mapsto (m \stmryolt V_X, j_x j_X^*)$ extends to an equivalence of right C$^*$-$\cC$-module categories.
Under this equivalence, $F_\sigma F$ corresponds to the identity functor on $\cM$, and this correspondence is by construction compatible with the module structures.
A similar argument shows that $F F_\sigma$ is naturally isomorphic to the identity functor on $\tilde{\cM}$.
\end{proof}

\begin{proposition}\label{prop:strictify-mod-ftr}
Let $\cM_1$ and $\cM_2$ be $\cC$-bimodule categories with central generators $m_1$ and $m_2$, respectively.
Let $F \colon \cM_1 \to \cM_2$ be a functor of right $\cC$-module categories such that $F(m_1) = m_2$.
Suppose that the maps~\eqref{eq:br-map} satisfy
\begin{multline}\label{eq:right-mod-ftr-compat-with-braiding}
  F_{2;m_1,U\otimes W}^{-1} F(\Sigma_{U;V,W}(T)) F_{2;m_1,U \otimes V} \\
  = \Sigma_{U;V,W}'(F_{2;m_1,W}^{-1}F(T)F_{2;m_1,V}) \quad (T \in \cM_1(m_1 \stmryolt V, m_1 \stmryolt W).
\end{multline}
Then we have a strict bimodule functor $\tilde F\colon \tilde\cM_1 \to \tilde\cM_2$ characterized by
\[
  \tilde F(T) = F_{2;m_1,W}^{-1} F(T) F_{2;m_1,V}
\]
for morphisms $T \in \tilde\cM_1(V, W) = \cM_1(m_1 \stmryolt V, m_1 \stmryolt W))$.
\end{proposition}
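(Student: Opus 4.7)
The plan is to define $\tilde F$ by conjugating the action of $F$ with the module-functor constraint $F_2$. On morphisms $T \in \tilde\cM_1(V,W) = \cM_1(m_1 \stmryolt V, m_1 \stmryolt W)$ we take the stated formula, and on an object represented by a projection $p \in \cM_1(m_1 \stmryolt V)$ we extend this by $p \mapsto F_{2;m_1,V}^{-1} F(p) F_{2;m_1,V}$; the hypothesis $F(m_1) = m_2$ ensures this lands in $\cM_2(m_2 \stmryolt V)$. Linearity and (where applicable) $*$-preservation are inherited from $F$, and functoriality follows from the cancellation of adjacent $F_{2;m_1,V} F_{2;m_1,V}^{-1}$ factors when composing.

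For compatibility with the right $\cC$-action, I would use the coherence identity for $F$ as a right module functor, which in strict form gives $F_{2;m_1,V \otimes X} = F_{2;m_1 \stmryolt V, X} \circ (F_{2;m_1,V} \stmryolt \id_X)$. Combined with the fact that the right action on morphisms of $\tilde\cM_i$ is literally the right action of $\cC$ on $\cM_i$, a short substitution yields $\tilde F(T \stmryolt \phi) = \tilde F(T) \stmryolt \phi$ and the analogous identity on objects. As a byproduct this shows $\tilde F(\id_{m_1} \stmryolt \psi) = \id_{m_2} \stmryolt \psi$ for any $\psi \in \cC(V',W')$, which I will reuse below.

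The main step is compatibility with the left $\cC$-action, and it is here that the assumption \eqref{eq:right-mod-ftr-compat-with-braiding} enters. In $\tilde\cM_i$, the left action $\phi \stmryogt T$ of $\phi \in \cC(X,Y)$ on $T \in \tilde\cM_i(V,W)$ is, by the construction of the trivialization, the morphism $m_i \stmryolt X \otimes V \to m_i \stmryolt Y \otimes W$ obtained from the $\cM_i$-morphism $\phi \stmryogt T$ by conjugating with $\sigma_X^{-1} \stmryolt \id_V$ on the source and $\sigma_Y \stmryolt \id_W$ on the target. Factoring $\phi \stmryogt T = (\id_Y \stmryogt T) \circ (\phi \stmryogt \id_{m_i \stmryolt V})$ and using naturality of $\sigma$ in $\phi$, which gives $(\sigma_Y \stmryolt \id_V)(\phi \stmryogt \id_{m_i} \stmryolt \id_V)(\sigma_X^{-1} \stmryolt \id_V) = \id_{m_i} \stmryolt (\phi \otimes \id_V)$, the resulting morphism in $\tilde\cM_i$ rewrites as $\Sigma_{Y;V,W}(T) \circ (\id_{m_i} \stmryolt (\phi \otimes \id_V))$. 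The first factor is handled by \eqref{eq:right-mod-ftr-compat-with-braiding}, and the second by the right-action compatibility already established together with the identity $\tilde F(\id_{m_1} \stmryolt \psi) = \id_{m_2} \stmryolt \psi$, so $\tilde F(\phi \stmryogt T) = \phi \stmryogt \tilde F(T)$. A parallel check at the object level gives $\tilde F(X \stmryogt (V,p)) = X \stmryogt \tilde F(V,p)$.

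Since both $\tilde\cM_1$ and $\tilde\cM_2$ carry strict bimodule structures by construction and $\tilde F$ preserves tensoring with $\cC$ on both sides on the nose, the structure isomorphisms $\tensor[_2]{\tilde F}{}$ and $\tilde F_2$ can all be taken to be identities, so $\tilde F$ is a strict bimodule functor. The only nontrivial obstacle is the left-action step, where the hypothesis \eqref{eq:right-mod-ftr-compat-with-braiding} is essential; everything else reduces to bookkeeping or to direct application of the right-module-functor coherence of $F$.
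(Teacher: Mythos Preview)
Your proposal is correct and follows essentially the same approach as the paper's proof, which is a terse two-sentence sketch saying that the hypothesis gives strict left-module compatibility while the coherence of $F_2$ gives strict right-module compatibility. You have supplied the details the paper omits, in particular the factorization $(\phi \stmryogt T)_{\tilde\cM} = \Sigma_{Y;V,W}(T)\circ(\id_m \stmryolt(\phi\otimes\id_V))$ via naturality of $\sigma$, which makes explicit how \eqref{eq:right-mod-ftr-compat-with-braiding} handles the full left action and not just $\id_U \stmryogt T$.
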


\begin{proof}
By the above formula, an object of $\tilde\cM_1$ represented by a projection $p \in \cM_1(m_1 \stmryolt V)$ is mapped to the object of $\tilde\cM_2$ represented by the projection $F_{2;m_1,V}^{-1} F(p) F_{2;m_1,V}$.

Then the above claim says that $\tilde F$ is a strict functor of left $\Rep(G)$-module categories.
The compatibility of $F_2$ and the (trivial) associativity morphisms imply that $\tilde F$ is also a strict functor of right module categories.
\end{proof}

The above functor $\tilde F$ satisfies $\tilde F(\tilde\sigma_U) = \tilde\sigma'_U$ for the half-braidings of $\tilde\cM_1$ and $\tilde\cM_2$, since $\tilde\sigma_U$ is given by $\id_{m_1 \stmryolt U} \in \cM(m_1 \stmryolt U)$.
Combined with Proposition~\ref{prop:equiv-to-triv-braiding-cat}, we obtain a central functor $\cM_1 \to \cM_2$ from a right module functor satisfying~\eqref{eq:right-mod-ftr-compat-with-braiding}.
Consequently, $\CB(\cC)$ is equivalent to the category with the same objects but right module category functors satisfying~\eqref{eq:right-mod-ftr-compat-with-braiding} as morphisms.

\section{Duality for Yetter--Drinfeld \texorpdfstring{$C^*$}{C*}-algebras}\label{sec:duality-for-YD-algs}

In this section, we establish the Tannaka--Krein duality for the unital Yetter--Drinfeld $G$-C$^*$-algebras.
In our presentation, their categorical dual are bimodules over $\Rep(G)$ with central generators.

\begin{definition}
    Given a compact quantum group $G$, we denote the category of unital Yetter--Drinfeld $G$-C$^*$-algebras and equivariant $*$-homomorphisms by $\YD(G)$.
\end{definition}

Recall also Definition~\ref{def:centrallypointedcyclicbimodules}, where the category $\CB^c(\Rep(G))$ of centrally pointed cyclic C$^*$-$\Rep(G)$-bimodules is defined.

\begin{theorem}\label{thm:yetterdrinfeldtheo1}
The categories $\YD(G)$ and $\cCB(\Rep(G))$ are equivalent.
Under this equivalence, two morphisms $F, F' \colon \cM \to \cM'$ in $\cCB(\Rep(G))$ induce the same homomorphism $B_\cM \to B_{\cM'}$ if and only if they are naturally isomorphic as central bimodule functors.
\end{theorem}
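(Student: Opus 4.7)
The plan is to lift the classical Tannaka--Krein equivalence between unital $G$-C$^*$-algebras and pointed cyclic right C$^*$-$\Rep(G)$-module categories, recalled in Section~\ref{sec:preliminaries}, to the Yetter--Drinfeld setting: I aim to show that the Yetter--Drinfeld structure on $B$ encodes precisely the extra $\Rep(G)$-bimodule structure on $\cD_B$, with the natural half-braiding at the distinguished generator~$B$.

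In the direction $\YD(G) \to \cCB(\Rep(G))$, I would start from the right C$^*$-$\Rep(G)$-module category $\cD_B$ of $G$-equivariant finitely generated projective right Hilbert $B$-modules. The Yetter--Drinfeld action produces the left $B$-actions $\pi_U \colon B \to \End_B(H_U \otimes B)$ of the introduction, which are $G$-equivariant by Proposition~\ref{prop:equivariancecanonicalleftaction}; this turns $H_U \otimes B$ into a $G$-equivariant $B$-bimodule. I would then extend $\cD_B$ to a C$^*$-$\Rep(G)$-bimodule category by declaring $U \stmryogt E = (H_U \otimes B) \otimes_B E$, with distinguished generator $B$ and half-braiding $\sigma_U = \id_{H_U \otimes B}$. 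The braid relation reduces to the $\cO(G)$-module axiom $(fg) \rhd b = f \rhd (g \rhd b)$ read off matrix coefficients, while well-definedness of the bimodule structure itself uses the Yetter--Drinfeld compatibility of $\rhd$ with the coaction $\alpha$.

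In the opposite direction, given $(\cM, m, \sigma) \in \cCB(\Rep(G))$, I would start from the $G$-C$^*$-algebra $B_{\cM} = \bigoplus_i \bar H_i \otimes \cM(m, m \stmryolt U_i)$ and define an $\cO(G)$-action by using $\sigma$ to transport a morphism $T \colon m \to m \stmryolt U_i$ to a morphism $m \to m \stmryolt U_k \otimes U_i$, decomposing $U_k \otimes U_i$ into irreducibles, and pairing the $U_k$ factor against the functional computing the relevant matrix coefficient of $f$. Associativity is the braid relation $\sigma_{U \otimes V} = (\sigma_U \stmryolt \id_V)(\id_U \stmryogt \sigma_V)$, the $*$-compatibility~\eqref{eq:O(G)-moduleCstaralgebra} follows from the unitarity of $\sigma_U$ combined with the duality data $R_U, \bar R_U$, and the Yetter--Drinfeld compatibility of $\rhd$ with $\alpha$ amounts to naturality of $\sigma$ read off isotypic components.

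To conclude, I would verify that the two constructions are quasi-inverse: on the underlying $G$-C$^*$-algebra and right module category levels this is the classical theorem, and the extra structures are matched on the nose once the canonical comparison morphisms are checked to respect half-braidings and $\cO(G)$-actions. Functoriality uses Proposition~\ref{prop:strictify-mod-ftr} to present central bimodule $*$-functors as right module $*$-functors satisfying~\eqref{eq:right-mod-ftr-compat-with-braiding}; this condition translates directly into $\cO(G)$-equivariance of the induced $*$-homomorphism on algebras. For the final statement about natural isomorphisms, the forward direction is a diagram chase in Definition~\ref{def:nat-trans-bimod-functors}: the three compatibility diagrams force conjugation by $\alpha$ to act trivially on the matrix-element presentation of $B_{\cM'}$. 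Conversely, if two central bimodule $*$-functors $F, F'$ induce the same algebra homomorphism, I would define $\alpha_{m \stmryolt U}$ explicitly from $F_0, F'_0, F_2, F'_2$ and extend by naturality and cyclicity to an isomorphism $F \cong F'$, checking the three diagrams using the agreement on algebras. The main obstacle is Step 2, specifically verifying the Yetter--Drinfeld compatibility of the newly defined $\cO(G)$-action with the coaction $\alpha$ on $B_\cM$: this requires carefully tracking the mixed associator $\Psi''$ against the isotypic decomposition of $U_k \otimes U_i$, and is where the braid relation on $\sigma$ (rather than a weaker weak-monoidal compatibility) is genuinely essential.
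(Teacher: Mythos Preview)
Your overall architecture matches the paper's: construct $\cD_B$ as a bimodule category via $\pi_U$, construct the $\cO(G)$-action on $\cB_\cM$ via $\sigma$, and then verify the two constructions are mutually inverse on top of the known $G$-equivariant duality, with functoriality handled through Proposition~\ref{prop:strictify-mod-ftr}. Two points need correction, however.

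First, your formula $U \stmryogt E = (H_U \otimes B) \otimes_B E$ is ill-posed: objects $E$ of $\cD_B$ are \emph{right} Hilbert $B$-modules with no left $B$-action, so this balanced tensor product is undefined. The paper takes $V \stmryogt X = X \otimes_B (H_V \otimes B)$ (Definition~\ref{yetterdrinfelddef1}), using the right $B$-structure on $X$ against the \emph{left} $B$-action $\pi_V$ on $H_V \otimes B$; the bimodule isomorphism $S_{U,V}$ (Proposition~\ref{yetterdrinfeldprop2}) and its coherence (Proposition~\ref{yetterdrinfeldprop3}) then supply the associativity and the half-braiding at $B$.

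Second, you misplace the ``main obstacle''. The Yetter--Drinfeld compatibility $\alpha(x \rhd a) = x_{(1)} a_{(1)} S(x_{(3)}) \otimes (x_{(2)} \rhd a_{(2)})$ on $\cB_\cM$ (Lemma~\ref{ydstructurelemma5}) is a direct matrix-coefficient computation: the element $u_{ij} \rhd (\bar\eta \otimes T)$ lands in the $U \otimes V \otimes \bar U$-component by construction, and the $G$-coaction acts only through the $\bar H$-leg, so both sides unwind to the same expression in the $u_{ik}$, $v_{kl}$, $\bar u_{jl}$. Neither the braid relation nor the mixed associator $\Psi''$ enters here. The braid relation is genuinely essential exactly where you first said, for associativity $(xy) \rhd a = x \rhd (y \rhd a)$ (Lemma~\ref{ydstructurelemma2}); the other nontrivial verifications are the algebra-module law $x \rhd (ab) = (x_{(1)} \rhd a)(x_{(2)} \rhd b)$ (Lemma~\ref{ydstructurelemma3}, via the conjugate equations) and the $*$-compatibility (Lemma~\ref{ydstructurelemma4}), which you correctly attribute to duality data. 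For the final clause on natural isomorphisms, the paper does not build $\alpha$ by hand as you propose but instead passes through the trivialization $\tilde\cM$ (Proposition~\ref{prop:equiv-to-triv-braiding-cat}) and the equivalence $\cM \simeq \cD_\cM$ (Proposition~\ref{prop:bimod-YD-bimod}), where the statement becomes tautological; your direct route should also work but is more laborious.
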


The proof of this theorem will occupy the rest of the section.

\subsection{From Yetter--Drinfeld algebras to centrally pointed bimodules}

Let $B$ be a Yetter--Drinfeld $C^*$-algebra with $G$-action $\alpha$ and $\hat{G}$-action $\beta$.
Let us show that the category $\cD_B$ admits a structure of centrally pointed bimodule category.

Given a unitary finite dimensional representation $U$ of $G$, we fix a choice $\{m^{U}_{ij}\}_{i,j}$ of matrix units for $\cB(H_U)$ and write
\[
  U = \sum_{i,j} m^U_{ij} \otimes u_{ij}.
\]
Also, on the right Hilbert $B$-module $H_U \otimes B$ we consider the coaction of $C(G)$ given by
\[
  \delta_U (\xi \otimes b) = U_{21}^* (b_{(1)} \otimes \xi \otimes b_{(2)}),
\]
where $\alpha(b) = b_{(1)} \otimes b_{(2)}$ denotes the coaction of $G$ on $B$.

\begin{proposition}\label{prop:equivariancecanonicalleftaction}
For a finite dimensional unitary representation $U$ of $G$, consider the homomorphism $\pi_U\colon B \to \End_B(H_U \otimes B)$ given by
\[
  \pi_U(a) = \sum_{i,j} m^U_{ij} \otimes (u_{ij} \rhd a).
\]
Then we have the equivariance condition
\[
  \delta_U ( \pi_U(a) (\xi \otimes b) ) = \left( (\id \otimes \pi_U) \alpha(a) \right) (\delta_U (\xi \otimes b)) \quad (a, b \in B, \xi \in H_U).
\]
\end{proposition}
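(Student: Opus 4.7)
The plan is a direct computation: expand both sides in matrix-coefficient coordinates, then check that the Yetter--Drinfeld compatibility between $\alpha$ and the $\cO(G)$-action forces the two expressions to coincide. Throughout I will use the standard facts that $U = \sum_{ij} m^U_{ij} \otimes u_{ij}$ is unitary, hence $\sum_i u_{ij}^* u_{il} = \delta_{jl}$, and that $S(u_{ij}) = u_{ji}^*$ since $(S \otimes \id)(U) = U^*$.

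\emph{Computing the left-hand side.} First I would apply $\pi_U(a)$ and expand $\delta_U$ on elementary tensors; using $m^U_{lk} m^U_{ij} = \delta_{ki} m^U_{lj}$, one obtains
\[
  \delta_U\bigl(\pi_U(a)(\xi \otimes b)\bigr) = \sum_{i,j,l} u_{il}^*\bigl((u_{ij} \rhd a) b\bigr)_{(1)} \otimes m^U_{lj} \xi \otimes \bigl((u_{ij} \rhd a) b\bigr)_{(2)}.
\]
Then I would apply multiplicativity of $\alpha$ together with the Yetter--Drinfeld identity
\[
  \alpha(u_{ij} \rhd a) = \sum_{r,s} u_{is} a_{(1)} S(u_{rj}) \otimes (u_{sr} \rhd a_{(2)}),
\]
which follows from the definition of $\rhd$ together with $(\Delta \otimes \id)\Delta(u_{ij}) = \sum_{s,r} u_{is} \otimes u_{sr} \otimes u_{rj}$. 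The factor $u_{il}^* u_{is}$ appears and summing over $i$ collapses it to $\delta_{ls}$ by unitarity, leaving
\[
  \sum_{j,l,r} a_{(1)} S(u_{rj}) b_{(1)} \otimes m^U_{lj} \xi \otimes (u_{lr} \rhd a_{(2)}) b_{(2)}.
\]

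\emph{Computing the right-hand side.} Here I would first apply $\delta_U$ to $\xi \otimes b$, giving $\sum_{p,q} u_{pq}^* b_{(1)} \otimes m^U_{qp} \xi \otimes b_{(2)}$, and then act by $\alpha(a) = a_{(1)} \otimes a_{(2)}$ through $\id \otimes \pi_U$. Expanding $\pi_U(a_{(2)})$ and using $m^U_{ij} m^U_{qp} = \delta_{jq} m^U_{ip}$ gives
\[
  \sum_{i,j,p} a_{(1)} u_{pj}^* b_{(1)} \otimes m^U_{ip} \xi \otimes (u_{ij} \rhd a_{(2)}) b_{(2)}.
\]

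\emph{Matching.} The two sums agree after the substitution $S(u_{rj}) = u_{jr}^*$ and relabelling $(j,l,r) \mapsto (p,i,j)$. So the whole proof reduces to (i) the Yetter--Drinfeld cocycle identity for $\alpha \circ (\rhd)$, (ii) unitarity of $U$, and (iii) the identification of $S$ on matrix coefficients.

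The only mildly tricky point — the main ``obstacle'' — is that a naive computation produces $a_{(1)}$ sandwiched between two matrix elements on the LHS versus on the RHS; the Yetter--Drinfeld relation is exactly the statement that allows one to move $a_{(1)}$ through a matrix coefficient at the cost of an antipode. Unlike in \cite{NY3}, no braided-commutativity is needed, since the argument never has to commute $b$ past $\rhd$-translates of $a$; the first and third legs remain separated throughout, and the only merging happens via the coalgebra identities for $u_{ij}$.
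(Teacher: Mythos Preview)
Your proof is correct and follows essentially the same route as the paper's: a direct coordinate computation using the Yetter--Drinfeld identity for $\alpha(u_{ij}\rhd a)$ together with unitarity of $U$ to collapse the extra sum. The only cosmetic difference is that the paper fixes $\xi = \xi_i$ at the outset and tracks basis vectors explicitly, whereas you keep $\xi$ arbitrary and carry $m^U_{lj}\xi$ through; the resulting index gymnastics and the final matching step are otherwise identical.
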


\begin{proof}
Let $\{ \xi_i \}_i$ be the basis of $H_U$ corresponding to the matrix units $\{m^U_{ij} \}_{ij}$.
Let us check the claim for $\xi = \xi_i$.

The left hand side is
\begin{align*}
  \delta_U (\pi_U(a) (\xi_i \otimes b)) & = \delta_U \left( \sum_{jk}m^U_{jk}(\xi_i) \otimes (u_{jk} \rhd a) b \right) \\
  & = \sum_{j,r,s} u_{rs}^* (u_{ji} \rhd a)_{(1)} b_{(1)} \otimes m^U_{rs}(\xi_j) \otimes (u_{ji} \rhd a)_{(2)} b_{(1)} \\
  & = \sum_{j,s,k,l} u_{js}^* u_{jk} a_{(1)} u_{il}^* b_{(1)} \otimes \xi_s \otimes (u_{kl} \rhd a_{(2)}) b_{(2)},
\end{align*}
since
\[
  \alpha (u_{ij} \rhd a) = (u_{ij} \rhd a)_{(1)} \otimes (u_{ij} \rhd a)_{(2)} = \sum_{k,l} u_{jr} a_{(1)} u_{il}^* \otimes (u_{kl} \rhd a_{(2)} )
\]
holds by the Yetter--Drinfeld property.
As $\sum_j u_{js}^* u_{jk} = \delta_{sk}$
\begin{align}\label{yetterdrinfeldeq1}
  \delta_U (\pi_U(a) (\xi_i \otimes b)) = \sum_{kl} a_{(1)} u_{il}^* b_{(1)} \otimes \xi_k \otimes (u_{kl} \rhd a_{(2)}) b_{(2)}
\end{align}
For the right hand side, we have
\begin{align*}
  \left( (\id \otimes \pi_U) \alpha(a) \right) \delta_U(\xi_i \otimes b) & = (a_{(1)} \otimes \pi_U(a_{(2)}) )\left( \sum_{j,k} u_{jk}^* b_{(1)} \otimes m^U_{kj}(\xi_i) \otimes b_{(2)} \right) \\
  & = \sum_k a_{(1)} u_{ik}^* b_{(1)} \otimes \pi_U(a_{(2)}) (\xi_k \otimes b_{(2)} ) \\
  & = \sum_{k,r,s} a_{(1)} u_{ik}^* b_{(1)} \otimes m^U_{rs}(\xi_k) \otimes (u_{rs} \rhd a_{(2)} )b_{(2)} \\
  & = \sum_{k,l} a_{(1)}u_{ik}^* b_{(1)} \otimes \xi_l \otimes (u_{lk} \rhd a_{(2)}) b_{(2)},
\end{align*}
which proves the claim.
\end{proof}

Thus, $H_U \otimes B$ becomes an equivariant bimodule over $B$.
We use this structure to define balanced tensor products
\[
  \left( H_U \otimes B \right) \otimes_B \left(H_V \otimes B \right),
\]
which is again a equivariant Hilbert $B$-module.
Note that the canonical isomorphism $B \otimes_B (H_U \otimes B ) \simeq H_U \otimes B$ is $G$-equivariant.
Given $U,V \in \Rep(G)$, the map
\[
  S_{U,V}\colon (H_V \otimes B) \otimes_B (H_U \otimes B) \to H_{U \otimes V} \otimes B, \quad (\zeta \otimes b) \otimes_B (\xi \otimes a) \mapsto \sum_{i,j} m^U_{ij}(\xi) \otimes \zeta \otimes (u_{ij} \rhd b)a
\]
is an equivariant isomorphism of right Hilbert $B$-modules.

\begin{proposition}\label{yetterdrinfeldprop2}
The map $S_{U,V}$ is a $B$-bimodule isomorphism.
\end{proposition}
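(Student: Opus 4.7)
The plan is as follows. Since $S_{U,V}$ has already been described as an equivariant isomorphism of right Hilbert $B$-modules, the remaining claim is that it intertwines the left $B$-actions. On the source $(H_V \otimes B) \otimes_B (H_U \otimes B)$, the left action is given by $\pi_V$ applied to the leftmost factor, whereas on the target $H_{U \otimes V} \otimes B$ it is given by $\pi_{U \otimes V}$. So I would verify
$$S_{U,V}\bigl( \pi_V(c)(\zeta \otimes b) \otimes_B (\xi \otimes a) \bigr) = \pi_{U \otimes V}(c)\, S_{U,V}\bigl( (\zeta \otimes b) \otimes_B (\xi \otimes a) \bigr)$$
for every $c \in B$ and every elementary tensor, by direct computation.

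To unpack the left-hand side, I would first expand $\pi_V(c)(\zeta \otimes b) = \sum_{kl} m^V_{kl}(\zeta) \otimes (v_{kl} \rhd c)b$ and then push it through $S_{U,V}$. The key quantity that appears is $u_{ij} \rhd ((v_{kl} \rhd c)b)$, which I would simplify using three inputs: the module algebra property $f \rhd (xy) = (f_{(1)} \rhd x)(f_{(2)} \rhd y)$, the coproduct formula $\Delta(u_{ij}) = \sum_r u_{ir} \otimes u_{rj}$ read off from~\eqref{eq:fin-dim-unitar-rep}, and the module identity $u_{ir} \rhd (v_{kl} \rhd c) = (u_{ir} v_{kl}) \rhd c$. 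The outcome is an expression of the shape $\sum_r (u_{ir} v_{kl} \rhd c)(u_{rj} \rhd b)$ sitting inside a double sum over the matrix units of $U$ and $V$.

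For the right-hand side, I would use that the matrix units of $H_{U \otimes V}$ are $m^U_{ij} \otimes m^V_{kl}$ with matrix coefficients $u_{ij} v_{kl}$, apply $\pi_{U \otimes V}(c)$ to $S_{U,V}((\zeta \otimes b) \otimes_B (\xi \otimes a)) = \sum_{pq} m^U_{pq}(\xi) \otimes \zeta \otimes (u_{pq} \rhd b)a$, and collapse $m^U_{ij} m^U_{pq} = \delta_{jp} m^U_{iq}$ to reduce the resulting triple sum. After relabeling indices, the two expressions coincide term by term, so $S_{U,V}$ is left $B$-linear and hence a $B$-bimodule isomorphism.

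The only step that requires real care is the Sweedler bookkeeping in expanding $u_{ij} \rhd ((v_{kl} \rhd c)b)$: the equality hinges on the fact that the Sweedler components of $\Delta(u_{ij})$ form precisely the internal matrix-product sum $\sum_r u_{ir} \otimes u_{rj}$, and it is this matching with the matrix-unit contraction on the opposite side that makes the two expressions line up. No further use of the Yetter--Drinfeld exchange relation is needed here, since that was already absorbed into Proposition~\ref{prop:equivariancecanonicalleftaction} which ensures $\pi_U$ is well-defined and $G$-equivariant; the present proposition only uses $B$ as a left $\cO(G)$-module $*$-algebra.
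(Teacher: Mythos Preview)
Your proposal is correct and follows essentially the same route as the paper: a direct computation on elementary tensors showing $S_{U,V}\bigl(\pi_V(c)(\zeta\otimes b)\otimes_B(\xi\otimes a)\bigr)=\pi_{U\otimes V}(c)\,S_{U,V}\bigl((\zeta\otimes b)\otimes_B(\xi\otimes a)\bigr)$, with the key step being the expansion $u_{ij}\rhd((v_{kl}\rhd c)b)=\sum_r (u_{ir}v_{kl}\rhd c)(u_{rj}\rhd b)$ via the module-algebra property and $\Delta(u_{ij})=\sum_r u_{ir}\otimes u_{rj}$. Your closing remark that only the $\cO(G)$-module algebra structure (not the full Yetter--Drinfeld condition) is used here is also accurate.
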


\begin{proof}
We need to show that the map $S_{U,V}$ respect the left $B$-actions, which is equivalent to the equality
\[
  \pi_V (b) \otimes \id = S_{U,V}^{-1} \pi_{U \otimes V}(b) S_{UV}
\]
for $b \in B$.

Let $a,b,c \in B$ and $\xi_{i_0} \in H_U, \zeta_{k_0} \in H_V$ be basis elements corresponding to the respective matrix units of $U$ and $V$.
\begin{align*}
  \pi_{U \otimes V}(c) S_{U,V} \left( (\zeta_{k_0} \otimes b ) \otimes_B ( \xi_{i_0} \otimes a) \right) & = \pi_{U \otimes V}(c) \left( \sum_i \xi_i \otimes \zeta_{k_0} \otimes ( u_{ii_0} \rhd a) \right) \\
  & = \sum_{ij} \xi_i \otimes \zeta_k \otimes (u_{ij}v_{kk_0} \rhd c )(u_{ji_0} \rhd b) a.
\end{align*}
On the other hand, we have
\begin{align*}
  S_{U,V} \left( \pi_V(c) ( \zeta_{K_0} \otimes b) \otimes_B (\xi_{i_0} \otimes a) \right) & = S_{U,V} \left( \sum_{k} \left( \zeta_k \otimes (v_{kk_0} \rhd c) b \right) \otimes_B (\xi_{i_0} \otimes a) \right) \\
  & = \sum_{i,k} \xi_i \otimes \zeta_k \otimes \left( u_{ii_0} \rhd ((v_{kk_0} \rhd c)b)  \right) a \\
  & = \sum_{i,j,k} \xi_i \otimes \zeta_k \otimes (u_{ij} v_{kk_0} \rhd c)(u_{ji_0} \rhd b) a \\
  & = \pi_{U \otimes V} S_{U,V} \left( (\zeta_{k_0} \otimes b) \otimes_B (\xi_{i_0} \otimes a ) \right),
\end{align*}
which proves the claim.
\end{proof}

Write briefly $B_U = H_U \otimes B$.

\begin{proposition}\label{yetterdrinfeldprop3}
We have
\[
  S_{U \otimes V,W} (\id_{B_W} \otimes S_{U,V}) = S_{U,V \otimes W} S_{V,W} \otimes \id_{B_U}.
\]
\end{proposition}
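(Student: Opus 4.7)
The plan is to verify the identity on elementary tensors. Both sides are right $B$-linear maps between the same equivariant right Hilbert $B$-modules, so it suffices to evaluate each side on vectors of the form $(\omega_{p_0} \otimes c) \otimes_B (\zeta_{k_0} \otimes b) \otimes_B (\xi_{i_0} \otimes a)$, where $\xi_{i_0} \in H_U$, $\zeta_{k_0} \in H_V$ and $\omega_{p_0} \in H_W$ are basis vectors adapted to the chosen matrix units.

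For the left-hand side, applying $S_{U,V}$ to the rightmost pair yields $\sum_i \xi_i \otimes \zeta_{k_0} \otimes (u_{i i_0} \rhd b)a$. Then applying $S_{U \otimes V, W}$ uses the matrix units $m^{U \otimes V}_{(i',k'),(j',l')} = m^U_{i'j'} \otimes m^V_{k'l'}$ with corresponding matrix coefficients $u_{i'j'} v_{k'l'}$ for $U \otimes V$. Evaluating these matrix units on the basis vectors collapses the double sum to
\[
  \sum_{i, i', k'} \xi_{i'} \otimes \zeta_{k'} \otimes \omega_{p_0} \otimes (u_{i' i} v_{k' k_0} \rhd c)(u_{i i_0} \rhd b) a.
\]

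For the right-hand side, applying $S_{V,W}$ to the leftmost pair gives $\sum_k \zeta_k \otimes \omega_{p_0} \otimes (v_{k k_0} \rhd c) b$, and then applying $S_{U, V \otimes W}$ produces $\sum_{i,k} \xi_i \otimes \zeta_k \otimes \omega_{p_0} \otimes (u_{i i_0} \rhd ((v_{k k_0} \rhd c) b)) a$. The essential step is to expand the inner nested action: using that $B$ is an $\cO(G)$-module algebra, so that $f \rhd (xy) = (f_{(1)} \rhd x)(f_{(2)} \rhd y)$, together with the coproduct formula $\Delta(u_{i i_0}) = \sum_r u_{i r} \otimes u_{r i_0}$ (a restatement of $U_{12} U_{13} = (\id \otimes \Delta)(U)$) and module associativity $(fg) \rhd x = f \rhd (g \rhd x)$, one obtains
\[
  u_{i i_0} \rhd ((v_{k k_0} \rhd c) b) = \sum_r (u_{i r} v_{k k_0} \rhd c)(u_{r i_0} \rhd b).
\]
After renaming dummy indices, this matches the left-hand side term by term.

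The proof is essentially index bookkeeping, with no genuine obstacle. The only conceptual input is the interaction of the $\cO(G)$-module algebra structure on $B$ with the coproduct of matrix coefficients, which is precisely the same mechanism already at work in Proposition \ref{yetterdrinfeldprop2}. Notably, the full Yetter--Drinfeld compatibility between coaction and action plays no role in this coherence identity, which concerns only the left $\cO(G)$-module structure on $B$.
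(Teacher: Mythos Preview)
Your proof is correct and follows essentially the same approach as the paper: evaluate both sides on elementary tensors and reduce the comparison to the module algebra identity $u_{i i_0} \rhd ((v_{k k_0} \rhd c) b) = \sum_r (u_{i r} v_{k k_0} \rhd c)(u_{r i_0} \rhd b)$. The only cosmetic difference is that the paper contracts the left-hand side down to the nested-action form, whereas you expand the right-hand side up to the double sum; your closing remark that the Yetter--Drinfeld compatibility plays no role here is accurate.
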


\begin{proof}
Let us compare the action of both sides on vectors of the form $(\zeta_{k_0} \otimes c) \otimes (\eta_{j_0} \otimes b ) \otimes (\xi_{i_0} \otimes a)$.
The left hand side gives
\begin{multline*}
  S_{U \otimes V,W} \left( \sum_{i_1} (\zeta_{k_0} \otimes c) \otimes (\xi_{i_1} \otimes \eta_{j_0} \otimes (u_{i_1 i_0} \rhd b) a) \right) \\
  = \sum_{i_1,i_2,j} \xi_{i_1} \otimes \eta_j \otimes \zeta_{k_0} \otimes (u_{i_1i_2}v_{jj_0} \rhd c)(u_{i_2i_0} \rhd b) a = \sum_{ij} \xi_i \otimes \eta_j \otimes \zeta_{k_0} \otimes \left( u_{ii_0} \rhd (v_{jj_0} \rhd c) b \right) a.
\end{multline*}
The right hand side gives
\begin{equation*}
  S_{U,V \otimes W} \biggl( \biggl( \sum_j (\eta_j \otimes \zeta_{k_0}) \otimes (v_{jj_0} \rhd c) b \biggr)b \otimes (\xi_{i_0} \otimes a ) \biggr) = \sum_{ij} \xi_i \otimes \eta_j \otimes \zeta_{k_0} \otimes \left( u_{ii_0} \rhd (v_{jj_0} \rhd c) b \right) a,
\end{equation*}
hence we obtain the claim.
\end{proof}

Given $W \in \Rep(G)$, let us denote by $\beta_W$ the composition of the $\hat{G}$-action $\beta\colon B \to \cM (B \otimes c_0(\hat{G}) )$ with the projection $\id \otimes \pi_W$, where $\pi_W \colon \ell^\infty(\hat{G}) \to \cB(H_W)$ is the representation homomorphism.

\begin{definition}\label{yetterdrinfelddef1}
Define a left action of $\Rep(G)$ on the category $\cD_B$ by
\[
  V \stmryogt X = X \otimes_B (H_V \otimes B),
\]
and for $T \in \cD_B (X, Y)$ and $W \in \Rep(G)$,
\[
  \id_W \stmryogt T = T \otimes \id_{H_W \otimes B}.
\]
\end{definition}

\begin{proposition}\label{yetterdrinfeldprop4}
Together with the left $\Rep(G)$-action given by Definition~\ref{yetterdrinfelddef1}, $\cC_B$ is a $\Rep(G)$-bimodule $C^*$-category.
\end{proposition}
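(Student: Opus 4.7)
The plan is to combine the three preceding propositions to upgrade $\cD_B$ from a right $\Rep(G)$-module $C^*$-category to a $\Rep(G)$-bimodule $C^*$-category. The structural ingredients are all in place: Proposition~\ref{prop:equivariancecanonicalleftaction} produces the $B$-bimodule structure on $H_V \otimes B$, Proposition~\ref{yetterdrinfeldprop2} yields the bimodule isomorphisms $S_{V,W}$ that will give the left-module associator, and Proposition~\ref{yetterdrinfeldprop3} is exactly the coherence of these isomorphisms needed for the pentagon of the left action.

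First I check that $V \stmryogt X = X \otimes_B (H_V \otimes B)$ lands in $\cD_B$ and is $*$-bifunctorial. Proposition~\ref{prop:equivariancecanonicalleftaction} supplies the $G$-equivariant left $B$-module structure $\pi_V$ on $H_V \otimes B$, so the balanced tensor product is naturally a $G$-equivariant right $B$-module; it is finitely generated projective since $H_V \otimes B$ is free of rank $\dim H_V$ as a right $B$-module. Functoriality in $X$ via $T \mapsto T \otimes \id$ is immediate, and functoriality in $V$ through morphisms of $\Rep(G)$ follows from the explicit form of $\pi_V$ being intertwined by any such morphism (a routine matrix unit computation using the Yetter--Drinfeld relation).

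Next I construct the associativity isomorphisms. The right-module associator $\Psi$ is inherited from the already established right-module structure on $\cD_B$. The left-module associator
\[
  \Psi'_{V,W,X}\colon (V \otimes W) \stmryogt X \longrightarrow V \stmryogt (W \stmryogt X)
\]
is defined as the inverse of the composition
\[
  (X \otimes_B (H_W \otimes B)) \otimes_B (H_V \otimes B) \simeq X \otimes_B \bigl((H_W \otimes B) \otimes_B (H_V \otimes B)\bigr) \xrightarrow{\id_X \otimes S_{V,W}} X \otimes_B (H_{V \otimes W} \otimes B),
\]
where the first isomorphism is associativity of the balanced tensor product and the second comes from Proposition~\ref{yetterdrinfeldprop2}; this composition is $G$-equivariant and unitary because $S_{V,W}$ is. The mixed associator $\Psi''_{V,X,W}\colon (V \stmryogt X) \stmryolt W \to V \stmryogt (X \stmryolt W)$ is the canonical rearrangement
\[
  H_W \otimes \bigl(X \otimes_B (H_V \otimes B)\bigr) \simeq (H_W \otimes X) \otimes_B (H_V \otimes B),
\]
which is manifestly unitary and $G$-equivariant since tensoring by the fixed Hilbert space $H_W$ commutes with $\otimes_B$ and with the coactions.

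Finally I verify the pentagon identities. The pentagon for $\Psi'$ alone reduces, upon applying $\id_X \otimes -$ to both sides, to the statement of Proposition~\ref{yetterdrinfeldprop3}, which is precisely the coherence asserting that $V \mapsto H_V \otimes B$ is associative up to $S$. The pentagons involving $\Psi$ and $\Psi''$ reduce to the associativity of vector-space tensor products together with the associativity of $\otimes_B$. The main obstacle is the bookkeeping inside three iterated balanced tensor products in the pentagon for $\Psi'$, but this is exactly what Proposition~\ref{yetterdrinfeldprop3} has been engineered to absorb; compatibility with the $C^*$-structure is immediate since each $S_{V,W}$ is unitary and tensoring with identities preserves the involution.
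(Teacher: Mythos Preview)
Your proposal is correct and follows essentially the same route as the paper: use Propositions~\ref{prop:equivariancecanonicalleftaction}--\ref{yetterdrinfeldprop3} to supply the bimodule structure on $H_V\otimes B$, the associator $S_{V,W}$, and the pentagon for the left action, then note that the mixed associator is the trivial rearrangement. The one point the paper chooses to spell out---that a $\Rep(G)$-intertwiner $T\colon U\to V$ induces a \emph{left} $B$-module map $T\otimes\id_B\colon B_U\to B_V$---is precisely your ``routine matrix unit computation,'' but note that what is used there is the intertwiner condition $(T\otimes 1)U=V(T\otimes 1)$ (equivalently, that $T$ intertwines the $c_0(\hat G)$-representations), not the Yetter--Drinfeld compatibility between $\alpha$ and $\rhd$.
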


\begin{proof}
Proposition~\ref{yetterdrinfeldprop3} gives natural isomorphisms
\[
  (U \stmryogt B) \stmryolt V = H_V \otimes (B \otimes_B B_U) \simeq B_V \otimes_B B_U  = U \stmryogt (B \stmryolt V).
\]
We need to show that for $T \in \Mor_{\Rep(G)}(U,V)$, $T \otimes \id_B \in \cC_B(B_U,B_V)$ is a left $B$-module map.
For $a,b \in B$ and $\xi \in H_U$,
\begin{align*}
  (T \otimes \id_B) \left( \pi_U(b) ( \xi \otimes a) \right) & = (T \otimes \id_B) \left( m^U_{ij}(\xi) \otimes ( u_{ij} \rhd b) a \right) \\
  & = (T \otimes \id_B) \left( m^U_{ij}(\xi) \otimes ((\id \otimes u_{ij}) \beta(b) \right) a.
\end{align*}
By the intertwiner condition for $T$, we see that this is equal to
\[
  m^V_{kl}(T\xi) \otimes \left( (\id \otimes v_{kl}) \beta(b) \right) a = \pi_V(b) (T\xi \otimes a),
\]
hence we obtain the claim.
\end{proof}

We conclude that the $\Rep(G)$-bimodule category $\cD_B$ has $B$ as a central generator, with the half-braiding $\sigma_U\colon U \stmryogt B \to B \stmryolt U$ given by the identity map up to the standard identification
\[
  U \stmryogt B = B \otimes_B (H_U \otimes B) \simeq H_U \otimes B.
\]
Thus, a Yetter--Drinfeld $G$-$C^*$-algebra gives rise to a $\Rep(G)$-bimodule category with a central generator.

\subsection{From centrally pointed bimodules to Yetter--Drinfeld algebras}

In the other direction, let $\cM$ be a $\Rep(G)$-bimodule $C^*$-category with a central generator $m$, with central structure $\sigma_U \colon U \stmryogt m \to m \stmryolt U$ for $U \in \Rep (G)$.
Being a cyclic right $\Rep(G)$-module $C^*$-category, to $\cM$ there is associated a unital $G$-$C^*$-algebra $B_\cM$, which is a completion of the vector space
\[
  \cB_\cM = \bigoplus_{U \in \Irr(G)} \bar{H}_U \otimes \cM(m,m \stmryolt U),
\]
which we call \emph{algebraic regular} part of $B_\cM$.
Let us briefly recall the $*$-algebra structure on this space.
For further reference, see~\cites{n1,NY3}.
It is built using the auxiliary universal algebra
\[
  \tilde{\cB}_\cM = \bigoplus_{U \in \cC}  \bar{H}_U \otimes \cM(m,m \stmryolt U),
\]
called  the \emph{universal cover} of $\cB_\cM$.
Its product is defined by
\[
  (\bar{\xi} \otimes T) \bullet (\bar{\eta} \otimes S) = (\overline{\xi \otimes \eta}) \otimes (T \otimes \id_V)S
\]
for $\bar{\xi} \otimes T \in \bar{H}_U \otimes \cM(m,m \stmryolt U)$ and $\bar{\eta} \otimes S \in \bar{H}_V \otimes \cM(m, m \stmryolt V)$.

There is a surjective idempotent linear map $\pi\colon \tilde{\cB}_\cM \to \cB_\cM$ defined by the decompositions of arbitrary elements of $\Rep(G)$ into irreducible objects, and for $x,y \in \cB_\cM$ the product $xy$ is $\pi(x \bullet y)$.
The specific form of $\pi$ will be given in the proof of Lemma~\ref{ydstructurelemma1}.
The involution on $\tilde{\cB}_\cM$ is given by
\[
  (\bar{\xi} \otimes T)^\dagger = \overline{\overline{ \rho_U^{-1/2} \xi}} \otimes (T^* \otimes \id_U)\bar{R}_U
\]
for $\bar{\xi} \otimes T \in \bar{H}_U \otimes \cM(m, m \stmryolt U)$, and we obtain the involution on $\cB_\cM$ by $x^* = \pi(x^\dagger)$.

The construction above can be applied to the Yetter--Drinfeld algebra $C(G)$.
The algebraic regular part of it is $\cO(G)$, the algebra of polynomial functions on $G$, and its universal cover is $\tilde{\cO}(G)$.
In this case, we denote by $\pi_G$ the canonical projection $\tilde{\cO}(G) \to \cO(G)$.

We claim that the $*$-algebra $\cB_\cM$ has a Yetter--Drinfeld structure.
Consider the action $\tilderhd \colon \tilde{\cO}(G) \otimes \tilde{\cB}_\cM \to \tilde{\cB}_\cM$ defined by
\begin{multline}\label{eq:reg-alg-action-in-YD-str}
  (\bar \xi \otimes \zeta) \tilde\rhd (\bar\eta \otimes T) \\
  = (\overline{\xi \otimes \eta \otimes \overline{\rho^{-1/2} \zeta}}) \otimes \left( (\sigma_U \otimes \id_V \otimes \id_{\bar{U}}) (\id_U \otimes T \otimes \id_{\bar{U}} ) (\sigma_U^{-1} \otimes \id_{\bar{U}} ) \bar{R}_U \right)
\end{multline}
for $T \in \cM(m, m \stmryolt V)$, $\bar\eta \in \bar H_V$, and $\xi, \zeta \in H_U$.
For $x  \in \cO(G)$ and $a \in \cB_\cM$, we put $x \rhd a = \pi ( x \mathbin{\tilde\rhd} a)$.

From now on, $U,V$ and $W$ will always stand for finite dimensional unitary representations of $G$, and to avoid cumbersome formulas we will often write $U V$ instead of $U \otimes V$.

\begin{lemma}\label{ydstructurelemma1}
For all $x \in \tilde{\cO}(G)$ and every $a \in \tilde{\cB}_\cM$,
\[
  \pi_G(x) \rhd \pi(a) = \pi( x \tilderhd a) .
\]
\end{lemma}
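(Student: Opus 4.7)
The plan is to exploit the fact that the formula~\eqref{eq:reg-alg-action-in-YD-str} is built entirely out of \emph{natural} ingredients (the half-braiding $\sigma$, the duality morphism $\bar R$, and composition), and therefore commutes with irreducible decompositions, which is precisely what $\pi$ records. Concretely, I would begin by giving an explicit description of $\pi$: for each $U \in \Rep(G)$ fix an irreducible decomposition $U \simeq \bigoplus_\alpha U_{i_\alpha}$ via isometries $w_\alpha \colon U_{i_\alpha} \to U$ with $\sum_\alpha w_\alpha w_\alpha^* = \id_U$, and note that
\[
  \pi(\bar\xi \otimes T) = \sum_\alpha \overline{w_\alpha^* \xi} \otimes (\id_m \stmryolt w_\alpha^*) T \quad \text{for } \bar\xi \otimes T \in \bar H_U \otimes \cM(m, m \stmryolt U),
\]
together with the analogous description for $\pi_G$.

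The key technical step is a \emph{naturality} statement for $\tilderhd$: given morphisms $f \in \Rep(G)(U_0, U)$ and $g \in \Rep(G)(V_0, V)$ and elements $\xi_0, \zeta_0 \in H_{U_0}$, $\eta_0 \in H_{V_0}$, $T_0 \in \cM(m, m \stmryolt V_0)$, one verifies
\[
  (\overline{f(\xi_0)} \otimes f(\zeta_0)) \tilderhd (\overline{g(\eta_0)} \otimes (\id_m \stmryolt g) T_0) = \Phi_{f,g}\left[(\bar\xi_0 \otimes \zeta_0) \tilderhd (\bar\eta_0 \otimes T_0)\right],
\]
where $\Phi_{f,g}$ denotes the natural push-forward along $f \otimes g \otimes \bar f \colon U_0 \otimes V_0 \otimes \bar U_0 \to U \otimes V \otimes \bar U$ on both tensor legs of $\bar H_{\bullet} \otimes \cM(m, m \stmryolt \bullet)$. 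This relies on three ingredients: naturality of the half-braiding $\sigma_U$ in $U$ (so that $\sigma_U (f \stmryogt \id_m) = (\id_m \stmryolt f) \sigma_{U_0}$), the fact that the Woronowicz operator $\rho_U$ intertwines morphisms in $\Rep(G)$ (so $\rho_U^{-1/2} f = f \rho_{U_0}^{-1/2}$), and the defining compatibility of the duality morphism $\bar R$ with $f$ and its dual $\bar f$.

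Given this naturality, the lemma follows by a routine bilinear expansion. Fixing decompositions of $U$ and $V$ as above, we expand $\pi_G(x) = \sum_\alpha \overline{w_\alpha^* \xi} \otimes w_\alpha^* \zeta$ and $\pi(a) = \sum_\beta \overline{v_\beta^* \eta} \otimes (\id_m \stmryolt v_\beta^*) T$, apply bilinearity of $\tilderhd$, and observe that the isometries $w_\alpha \otimes v_\beta \otimes \bar w_\alpha$ assemble into an irreducible decomposition of $U \otimes V \otimes \bar U$ that refines any chosen one. Using this refined decomposition to compute $\pi(x \tilderhd a)$ together with the naturality statement equates the individual $(\alpha,\beta)$-summands on both sides. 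The main obstacle is the bookkeeping of the various identifications—particularly matching the irreducible decomposition of $U \otimes V \otimes \bar U$ with the iterated decomposition through $\bigoplus_{\alpha,\beta} U_{i_\alpha} \otimes V_{j_\beta} \otimes \bar U_{i_\alpha}$—but there is no genuine difficulty beyond verifying the promised naturality identity carefully term-by-term in~\eqref{eq:reg-alg-action-in-YD-str}.
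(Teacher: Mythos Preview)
Your overall strategy---decompose $U$ and $V$ into irreducibles via isometries $w_\alpha$ and $v_\beta$, then match terms---is exactly the paper's approach. However, there is a genuine gap in your key naturality claim, and it is precisely the point where the paper does the real work.

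The naturality identity you state fails as an equality in $\tilde{\cB}_\cM$. The morphism component of $x \tilderhd a$ in~\eqref{eq:reg-alg-action-in-YD-str} involves $\bar R_U$, which depends only on $U$ and \emph{not} on $\xi$ or $\zeta$. For an isometric summand inclusion $f\colon U_0 \hookrightarrow U$, one does \emph{not} have $\bar R_U = (f \otimes \bar f)\bar R_{U_0}$; rather $\bar R_U = \sum_\alpha (w_\alpha \otimes \bar w_\alpha)\bar R_{U_{i_\alpha}}$, and $(f \otimes \bar f)\bar R_{U_0}$ is only one summand. Consequently the left-hand side of your proposed identity carries extra terms coming from the other summands of $\bar R_U$, so the two sides are unequal in $\tilde{\cB}_\cM$. (They do agree \emph{after} applying $\pi$, but establishing that is the whole content of the lemma.)

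Relatedly, the isometries $w_\alpha \otimes v_\beta \otimes \bar w_\alpha$ (diagonal in the two $U$-slots) do \emph{not} decompose $U \otimes V \otimes \bar U$: one needs all $w_\alpha \otimes v_\beta \otimes \bar w_\gamma$, and the off-diagonal contributions ($\alpha \neq \gamma$) must be shown to vanish. The paper handles exactly this by expanding $\pi(x\tilderhd a)$ over all $(i,j,k)$, then invoking $\bar R_U = \sum_i (u_i \otimes \bar u_i)\bar R_{s_i}$ together with the mutual orthogonality of the $u_i$ to kill the $i\neq k$ terms, and $u_k^*\rho_U = \rho_{U_k} u_k^*$ to match the vector legs. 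Once you incorporate these two facts---the decomposition of $\bar R_U$ and the resulting vanishing of off-diagonal pieces---your argument becomes correct and coincides with the paper's.
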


\begin{proof}
Take $x = \bar{\xi} \otimes \zeta \in \bar{H}_U \otimes H_U$ and $a = \bar{\eta} \otimes T \in \bar{H}_V \otimes \cM (m, m \stmryolt V)$.
Take partial isometries $u_i\colon H_{s_i} \to H_U$ and $v_j\colon H_{s_j} \to H_V$ defining decompositions of $U$ and $V$ into irreducible finite dimensional unitary representations.
Then we have
\[
  \pi_G(x) \rhd \pi(a) =  \pi \left( \sum_{i,j} (\overline{u_i^* \xi} \otimes u_i^* \zeta) \tilderhd (\overline{v_j^* \eta} \otimes v_j^* T ) \right),
\]
which can be expanded as
\[
  \pi \left( \sum_{i,j} \overline{( u_i^* \xi \otimes v_j^* \eta \otimes \overline{\rho_U^{-1/2} (u_i^* \zeta)})} \otimes (\sigma_{U_{s_i}} \otimes \id_V \otimes \id_{\bar{U}_{s_i}}) ( \id_{U_{s_i}} \otimes v_j^* T \otimes \id_{\bar{U}_{s_i}})( \sigma_{U_{s_i}}^* \otimes \id_{\bar{U}_{s_i}}) \bar{R}_{s_i} \right).
\]
On the other hand, we also have
\[
  \pi ( x \tilde{ \rhd} a) = \pi \left( \overline{( \xi \otimes \eta \otimes \overline{ \rho_U^{-1/2} \zeta} ) }  \otimes (\sigma_U \otimes \id_V \otimes \id_{\bar{U}}) ( \id_U \otimes T \otimes \id_{\bar{U}}) ( \sigma_U^* \otimes \id_{\bar{U}}) \bar{R}_U   \right)
\]
which can be expanded as
\begin{multline*}
  \pi \Bigl( \sum_{i,j,k} \overline{ ( u_i^* \xi \otimes v_j^* \eta \otimes \bar{u}_k^* \overline{ \rho_U^{-1/2} \zeta})} \otimes \\
  ( \sigma_{U_{s_i}} \otimes \id_{U_{s_j}} \otimes \id_{\bar{U}_{s_k}}) (\id_{U_{s_i}} \otimes v_j^* T \otimes \id_{\bar{U}_{s_k}}) (\sigma_{U_{s_i}} \otimes \id_{\bar{U}_{s_k}})( u_i^* \otimes u_k^*) \bar{R}_U  \Bigr).
\end{multline*}
Since $u_k^* \rho_U = \rho_{U_k} u_k^*$, $\bar{R}_U = \sum_i (u_i \otimes \bar{u}_i) \bar{R}_{s_i}$ and the partial isometries $u_i$ have mutually orthogonal ranges, we conclude that
\[
  \pi_G(x) \rhd \pi(a) = \pi ( x \tilde{ \rhd} a),
\]
establishing the claim.
\end{proof}

\begin{lemma}\label{ydstructurelemma2}
The map $\rhd$ makes $\cB_\cM$ into a left $\cO(G)$-module.
\end{lemma}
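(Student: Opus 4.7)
The plan is to transfer the question from $\cO(G)$ to its universal cover $\tilde{\cO}(G)$, where the action has an explicit formula, and to use Lemma~\ref{ydstructurelemma1} to descend. Since $\pi_G \colon \tilde{\cO}(G) \to \cO(G)$ is a surjective unital algebra homomorphism (with product $\pi_G(x\bullet y) = \pi_G(x)\pi_G(y)$), iterating Lemma~\ref{ydstructurelemma1} gives, for $\tilde x,\tilde y \in \tilde\cO(G)$ and $\tilde a \in \tilde\cB_\cM$,
\[
  \pi_G(\tilde x) \rhd \bigl(\pi_G(\tilde y) \rhd \pi(\tilde a)\bigr) = \pi\bigl(\tilde x \tilderhd (\tilde y \tilderhd \tilde a)\bigr), \qquad \bigl(\pi_G(\tilde x)\pi_G(\tilde y)\bigr) \rhd \pi(\tilde a) = \pi\bigl((\tilde x \bullet \tilde y) \tilderhd \tilde a\bigr).
\]
Hence it is enough to verify the associativity $(\tilde x \bullet \tilde y) \tilderhd \tilde a = \tilde x \tilderhd (\tilde y \tilderhd \tilde a)$ on the level of $\tilde\cB_\cM$, together with triviality of the action of the unit.

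The key computation is to take $\tilde x = \bar\xi_1 \otimes \zeta_1 \in \bar H_{U_1} \otimes H_{U_1}$, $\tilde y = \bar\xi_2 \otimes \zeta_2 \in \bar H_{U_2} \otimes H_{U_2}$, and $\tilde a = \bar\eta \otimes T \in \bar H_V \otimes \cM(m, m \stmryolt V)$, expanding both sides via~\eqref{eq:reg-alg-action-in-YD-str}. The crucial algebraic input is the braid relation applied to $\sigma_{U_1 \otimes U_2}$:
\[
  \sigma_{U_1 \otimes U_2} = (\sigma_{U_1} \stmryolt \id_{U_2})(\id_{U_1} \stmryogt \sigma_{U_2}), \qquad \sigma_{U_1 \otimes U_2}^{-1} = (\id_{U_1} \stmryogt \sigma_{U_2}^{-1})(\sigma_{U_1}^{-1} \stmryolt \id_{U_2}).
\]
Combined with the standard identities $\overline{U_1 \otimes U_2} \cong \bar U_2 \otimes \bar U_1$, $\rho_{U_1 \otimes U_2} = \rho_{U_1} \otimes \rho_{U_2}$, and $\bar R_{U_1 \otimes U_2} = (\id_{U_1} \otimes \bar R_{U_2} \otimes \id_{\bar U_1}) \bar R_{U_1}$, this expresses the morphism appearing in $(\tilde x \bullet \tilde y) \tilderhd \tilde a$ as a composition which, after applying naturality of $\sigma_{U_1}$ with respect to the ``inner'' morphism $(\sigma_{U_2} \otimes \id_V \otimes \id_{\bar U_2}) (\id_{U_2} \otimes T \otimes \id_{\bar U_2}) (\sigma_{U_2}^{-1} \otimes \id_{\bar U_2}) \bar R_{U_2}$, coincides with the morphism appearing in $\tilde x \tilderhd (\tilde y \tilderhd \tilde a)$. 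On the Hilbert space side, the vectors match once one uses the identification $\overline{\rho_{U_1 \otimes U_2}^{-1/2}(\zeta_1 \otimes \zeta_2)} \leftrightarrow \overline{\rho_{U_2}^{-1/2}\zeta_2} \otimes \overline{\rho_{U_1}^{-1/2}\zeta_1}$ implicit in $\overline{U_1 \otimes U_2} \cong \bar U_2 \otimes \bar U_1$.

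The unit axiom is easy: $1_{\cO(G)} = \pi_G(\bar\Omega \otimes \Omega)$ for a unit vector $\Omega \in H_{1_{\Rep(G)}} = \C$, and since $\sigma_{1_{\Rep(G)}} = \id_m$ while $\bar R_{1_{\Rep(G)}}$ is the canonical scalar, formula~\eqref{eq:reg-alg-action-in-YD-str} gives $(\bar\Omega \otimes \Omega) \tilderhd \tilde a = \tilde a$ for any $\tilde a$, so that $1 \rhd a = a$ by Lemma~\ref{ydstructurelemma1}. The main obstacle is the bookkeeping in the previous step: placing the copies of $\bar R_{U_j}$ correctly relative to the $\sigma_{U_j}$'s, and keeping track of the order of the dual factors $\bar U_2 \otimes \bar U_1$ versus $\bar U_1 \otimes \bar U_2$, so that the decomposition of $\sigma_{U_1 \otimes U_2}$ via the braid relation produces exactly the iterated expression arising from applying $\tilderhd$ twice.
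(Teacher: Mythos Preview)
Your approach is correct and essentially identical to the paper's: both reduce to comparing $\pi\bigl(\tilde x \tilderhd (\tilde y \tilderhd \tilde a)\bigr)$ with $\pi\bigl((\tilde x \bullet \tilde y) \tilderhd \tilde a\bigr)$ via the braid relation for $\sigma_{U_1\otimes U_2}$ and the identification $\overline{U_1\otimes U_2}\cong \bar U_2\otimes \bar U_1$ together with $\bar R_{U_1\otimes U_2}=(\id_{U_1}\otimes \bar R_{U_2}\otimes \id_{\bar U_1})\bar R_{U_1}$. One small caution: the two sides live in different summands of $\tilde\cB_\cM$ (indexed by $U_1 U_2 V \bar U_2 \bar U_1$ versus $U_1 U_2 V\,\overline{U_1 U_2}$), so the equality you want is only after applying $\pi$ (equivalently, after transporting along $\gamma$), exactly as the paper phrases it---your later remark about ``using the identification'' is precisely this, but the sentence ``it is enough to verify \ldots on the level of $\tilde\cB_\cM$'' slightly overstates what holds strictly.
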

\begin{proof}
Taking $x = \bar{\xi} \otimes \zeta \in \bar{H}_U \otimes H_U$, $y = \bar{\mu} \otimes \nu \in \bar{H}_W \otimes H_W$ and $a = \bar{\eta} \otimes T \in \bar{H}_V \otimes \cM (m, m \stmryolt V)$, we must show that
\[
  \pi ( x \tilderhd (y \tilderhd a)) = \pi ((xy) \tilderhd a) .
\]

Expanding the left hand side, we obtain
\begin{multline*}
  \pi \Bigl(  \overline{ ( \xi \otimes \mu \otimes \eta \otimes \overline{\rho_W^{-1/2} \nu} \otimes \overline{ \rho_U^{-1/2} \zeta})} \\
  \otimes (\sigma_{UW} \otimes \id_{V\bar{W} \bar{U}}) ( \id_{UW} \otimes T \otimes \id_{\bar{W} \bar{U}})( \sigma_{UW}^* \otimes \id_{\bar{W} \bar{U}}) ( \id_U \otimes \bar{R}_W \otimes \id_{\bar{U}}) \bar{R}_U   \Bigr) .
\end{multline*}
The right hand side is, in turn,
\begin{equation*}
  \pi \left(\overline{( \xi \otimes \mu \otimes \overline{ \rho_U^{-1/2} \zeta \otimes \rho_W^{-1/2} \nu})} \otimes( \sigma_{UW} \otimes \id_{V \bar{UW}})( \id_{UW} \otimes T \otimes \id_{\bar{UW}})(\sigma_{UW}^* \otimes \id_{\bar{UW}}) \bar{R}_{UW} \right)
\end{equation*}
The equality between the two expressions comes from the fact that
\[
  \gamma\colon \bar{H}_W \otimes \bar{H}_U \to \overline{H_U \otimes H_W}, \quad \gamma(\bar{a} \otimes \bar{b}) = \overline{b \otimes a},
\]
is an equivalence of representations, and that we can take
\[
  \bar{R}_{UW} = (\id_{UW} \otimes \gamma) (\id_U \otimes \bar{R}_W \otimes \id_{\bar{U}}) \bar{R}_U,
\]
hence the claim.
\end{proof}

\begin{lemma}\label{ydstructurelemma3}
The map $\rhd$ is an algebra-module map, that is,
\[
  x \rhd(ab) = (x_{(1)} \rhd a) (x_{(2)} \rhd b)
\]
holds for any $x \in \cO(G)$ and $a, b \in \cB_\cM$.
\end{lemma}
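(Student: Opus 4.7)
The plan is to lift the identity to the universal cover $\tilde{\cB}_\cM$ and apply Lemma~\ref{ydstructurelemma1}. Fix $x = \bar\xi \otimes \zeta \in \bar H_U \otimes H_U$, $a = \bar\eta \otimes T \in \bar H_V \otimes \cM(m, m \stmryolt V)$, and $b = \bar\mu \otimes S \in \bar H_W \otimes \cM(m, m \stmryolt W)$. For an orthonormal basis $(e_k)$ of $H_U$, the element $\sum_k (\bar\xi \otimes e_k) \otimes (\bar e_k \otimes \zeta)$ in $\tilde{\cO}(G) \otimes \tilde{\cO}(G)$ is a lift under $\pi_G \otimes \pi_G$ of the coproduct $\Delta(\pi_G(x))$. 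Using Lemma~\ref{ydstructurelemma1} and the fact that $\pi$ intertwines $\bullet$ and the product on $\cB_\cM$, the claim reduces to the universal-cover identity
\[
  \pi\bigl(x \tilderhd (a \bullet b)\bigr) = \pi\biggl(\sum_k \bigl((\bar\xi \otimes e_k) \tilderhd a\bigr) \bullet \bigl((\bar e_k \otimes \zeta) \tilderhd b\bigr)\biggr).
\]

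First I would expand both sides using~\eqref{eq:reg-alg-action-in-YD-str} and the definition of $\bullet$. The left side yields a single element of $\bar H_{U \otimes V \otimes W \otimes \bar U} \otimes \cM(m, m \stmryolt U \otimes V \otimes W \otimes \bar U)$, whose morphism part is $(\sigma_U \stmryolt \id_{VW\bar U})(\id_U \stmryogt (T \otimes \id_W) S \otimes \id_{\bar U})(\sigma_U^* \stmryolt \id_{\bar U}) \bar R_U$. Each summand of the right side lives in $\bar H_{U V \bar U U W \bar U} \otimes \cM(m, m \stmryolt U V \bar U U W \bar U)$; its tensor-representation part is $\overline{\xi \otimes \eta \otimes \overline{\rho_U^{-1/2} e_k} \otimes e_k \otimes \mu \otimes \overline{\rho_U^{-1/2}\zeta}}$, and its morphism part is $(A \stmryolt \id_{U W \bar U}) \circ B$, where $A$ and $B$ each have the shape $(\sigma_U \stmryolt \id)(\id_U \stmryogt \,\cdot\,)(\sigma_U^* \stmryolt \id)\bar R_U$ built respectively from $T$ and from $S$.

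The crucial identification is that $\sum_k \overline{\rho_U^{-1/2} e_k} \otimes e_k$, viewed as an element of $\bar H_U \otimes H_U \cong H_{\bar U} \otimes H_U$, coincides with $R_U \colon 1_\cC \to \bar U \otimes U$. Indeed, under the operator identification $\bar H_U \otimes H_U \cong \End(H_U)$ given by $\bar v \otimes w \mapsto |w\rangle\langle v|$, both sides correspond to $\rho_U^{-1/2}$. The effect of inserting $R_U$ in the middle two slots of the tensor-representation factor is, after applying $\pi$, to contract the interior $\bar U \otimes U$ pair in the morphism against $R_U^*$. Using naturality of $\sigma_U$ to slide the interior $\sigma_U^* \stmryolt \id$ and $\sigma_U \stmryolt \id$ together against the contraction, and applying the conjugate equation $(\id_U \otimes R_U^*)(\bar R_U \otimes \id_U) = \id_U$ to eliminate one of the two $\bar R_U$ factors together with the newly-introduced $R_U^*$, the resulting morphism collapses to precisely $(\sigma_U \stmryolt \id_{VW\bar U})(\id_U \stmryogt (T \otimes \id_W) S \otimes \id_{\bar U})(\sigma_U^* \stmryolt \id_{\bar U}) \bar R_U$, matching the left-hand side.

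The main obstacle is this final algebraic collapse: keeping track of the many instances of $\sigma_U$, $\bar R_U$, and $\rho_U^{\pm 1/2}$, and verifying that the $\pi$-equivalence correctly translates the insertion of $R_U$ on the tensor-representation side into the categorical contraction on the morphism side. A graphical-calculus argument using string diagrams for $\sigma_U$, $R_U$, and $\bar R_U$ would render this cancellation transparent, but the symbolic version requires painstaking bookkeeping. Once the key identification $\sum_k \overline{\rho_U^{-1/2} e_k} \otimes e_k = R_U$ is in place and the contraction has been set up, the remainder of the verification is forced by naturality of $\sigma_U$ and the conjugate equations.
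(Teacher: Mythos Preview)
Your proposal is correct and follows essentially the same route as the paper's proof: both reduce to the universal cover via Lemma~\ref{ydstructurelemma1}, expand the two sides, recognise $\sum_k \overline{\rho_U^{-1/2} e_k}\otimes e_k$ as $R_U(1)$ so that applying $\pi$ amounts to contracting the interior $\bar U\otimes U$ pair against $R_U^*$, and then invoke the conjugate equation to collapse the right-hand morphism to the left-hand one. The only cosmetic difference is that the paper fixes $x=u_{ij}$ using an orthonormal basis while you work with a generic $x=\bar\xi\otimes\zeta$; also, the ``sliding'' of the interior $\sigma_U^*$ and $\sigma_U$ past the contraction is really just bifunctoriality of the module actions (whiskers on disjoint legs commute) rather than naturality of $\sigma_U$, but this does not affect the argument.
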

\begin{proof}
Take $a = \bar{\eta} \otimes T \in \bar{H}_V \otimes \cM(m, m \stmryolt V)$, $b = \bar{\zeta} \otimes S \in \bar{H}_W \otimes \cM(m, m \stmryolt W)$, and take $x$ to be $u_{ij} = \pi_G(\bar{\xi}_i \otimes \xi_j)$, where $\{\xi_i\}_i$ is an orthonormal basis of $H_U$, so that $\{u_{ij}\}$ are the matrix coefficients of $U$ with respect to it.
By $\Delta(u_{ij}) = \sum_k u_{ik} \otimes u_{kj}$, our claim is equivalent to
\[
  \pi \left( (\bar{\xi}_i \otimes \xi_j) \tilderhd (ab) \right) = \sum_k \pi \left( ((\bar{\xi}_i \otimes \xi_k) \tilderhd a)((\bar{\xi}_k \otimes \xi_j) \tilderhd b) \right).
\]
The left hand side is
\[
  \pi \left(\overline{(\xi_i \otimes \eta \otimes \zeta \otimes \overline{\rho_U^{-1/2} \xi_j})} \otimes (\sigma_U \otimes \id_{VW\bar{U}})(\id_U \otimes (T \otimes \id_W)S \otimes \id_{\bar{U}})(\sigma_U^* \otimes \id_{\bar{U}}) \bar{R}_U \right),
\]
while the right hand side is
\[
  \sum_k \pi \left( \overline {(\xi_i \otimes \eta \otimes \overline{\rho_U^{-1/2} \xi_k} \otimes \xi_k \otimes \zeta \otimes \overline{\rho_U^{-1/2} \xi_j})} \otimes X \right),
\]
where
\begin{multline*}
  X = (\sigma_U \otimes \id_{V\bar{U}UW\bar{U}})(\id_U \otimes T \otimes \id_{\bar{U}UW \bar{U}})\\
  (\sigma_U^* \otimes \id_{\bar{U}UW\bar{U}}) (\bar{R}_U \otimes \id_{UW\bar{U}})(\sigma_U \otimes \id_{W\bar{U}})(\id_U \otimes S \otimes \id_{\bar{U}})(\sigma_U^* \otimes \id_{\bar{U}}) \bar{R}_U).
\end{multline*}
We have that $\sum_k \overline{\rho_U^{-1/2} \xi_k} \otimes \xi_k = R_U(1)$, and the operator $d_U^{-1/2} R_U$ is an isometric embedding of the trivial representation $1$ into $\bar{U} \otimes U$.
Therefore
\[
  \sum_k \pi \left( \overline {(\xi_i \otimes \eta \otimes \overline{\rho_U^{-1/2} \xi_k} \otimes \xi_k \otimes \zeta \otimes \overline{\rho_U^{-1/2} \xi_j})} \otimes X \right)
\]
is equal to
\[
  \pi \left(\overline{(\xi_i \otimes \eta \otimes \zeta \otimes \overline{\rho_U^{-1/2} \xi_j})} \otimes (\id_{UV} \otimes R_U^* \otimes_{W\bar{U}}) X \right),
\]
which is equal to $\pi \left( (\bar{\xi}_i \otimes \xi_j) \tilderhd (ab) \right)$ since $(R_U^* \otimes \id_U) (\id_U \otimes \bar{R}_U) = \id_U$.
\end{proof}

\begin{lemma}\label{ydstructurelemma4}
The $\cO(G)$-action on $\cB_\cM$ is compatible with the $*$-structure:
\[
  x \rhd a^* = \left( S(x)^* \rhd a\right)^*.
\]
\end{lemma}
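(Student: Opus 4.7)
The plan is to verify the identity at the level of the universal cover $\tilde{\cB}_\cM$ and then transfer it to $\cB_\cM$ via Lemma~\ref{ydstructurelemma1}. Fix $x = \bar\xi \otimes \zeta \in \bar H_U \otimes H_U \subset \tilde\cO(G)$ and $a = \bar\eta \otimes T \in \bar H_V \otimes \cM(m, m \stmryolt V)$. A direct matrix-coefficient computation using $S(u_{ij}) = u_{ji}^*$ gives $S(\pi_G(\bar\xi\otimes\zeta))^* = \pi_G(\bar\zeta \otimes \xi)$, so by Lemma~\ref{ydstructurelemma1} it suffices to establish
\[
  \pi\bigl((\bar\xi \otimes \zeta) \tilderhd (\bar\eta \otimes T)^\dagger\bigr) = \pi\bigl(((\bar\zeta \otimes \xi) \tilderhd (\bar\eta \otimes T))^\dagger\bigr).
\]

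Both sides are then expanded using \eqref{eq:reg-alg-action-in-YD-str} and the definition of $\dagger$. Each produces an element of $\bar H_W \otimes \cM(m, m \stmryolt W)$ with $W = U \otimes \bar V \otimes \bar U$ (invoking the canonical identification $\overline{U \otimes V \otimes \bar U} \simeq U \otimes \bar V \otimes \bar U$, and using $\rho_V^{1/2}\rho_V^{-1/2} = \id$ to handle the $\rho$-factors on the Hilbert-space side). The $\bar H_W$-factors match by inspection, reducing the problem to matching the morphisms in $\cM(m, m \stmryolt W)$.

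The key categorical input on the morphism side is the expression of $\sigma_{\bar V}$ in terms of $\sigma_V$. By naturality of $\sigma$ applied to $\bar R_V\colon 1 \to V \otimes \bar V$, combined with the half-braiding relation $\sigma_{V \otimes \bar V} = (\sigma_V \stmryolt \id_{\bar V})(\id_V \stmryogt \sigma_{\bar V})$, we obtain
\[
  (\sigma_V \stmryolt \id_{\bar V})(\id_V \stmryogt \sigma_{\bar V})(\bar R_V \stmryogt \id_m) = \id_m \stmryolt \bar R_V,
\]
and a dual computation with $R_V$ and the zig-zag identities extracts $\sigma_{\bar V}$ explicitly in terms of $\sigma_V^{-1}$. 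Substituting this on the right-hand side and simplifying by naturality of $\sigma$ together with the zig-zag identities for $R_U, \bar R_U$ (to cancel the $U \otimes \bar U$ pairs introduced by $\dagger$) brings both sides to a common expression.

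The main obstacle will be the bookkeeping: keeping track of several occurrences of $\sigma^{\pm 1}$ on different legs, the $\rho$-factors inside $R$ and $\bar R$, and the reorderings forced by $\dagger$. Since every morphism involved is unitary and the relations used are only those of the rigid C$^*$-tensor category structure together with the half-braiding axioms, a systematic (if lengthy) calculation yields the identity.
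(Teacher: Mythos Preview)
Your overall strategy matches the paper's proof: reduce via $S(\pi_G(\bar\xi\otimes\zeta))^* = \pi_G(\bar\zeta\otimes\xi)$ and Lemma~\ref{ydstructurelemma1} to the identity $\pi\bigl(x\tilderhd a^\dagger\bigr)=\pi\bigl((x^\dagger\tilderhd a)^\dagger\bigr)$, expand both sides from the definitions, and finish with rigidity identities. That part is fine.

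What is off is your identification of the ``key categorical input.'' Neither side involves $\sigma_{\bar V}$ at all. In the formula~\eqref{eq:reg-alg-action-in-YD-str} the half-braiding is indexed by the representation carrying the $\cO(G)$-element, which in both $x\tilderhd a^\dagger$ and $x^\dagger\tilderhd a$ is $U$; the representation $V$ (or $\bar V$) only enters through $T$ and $\bar R_V$, never through $\sigma$. So the relation expressing $\sigma_{\bar V}$ in terms of $\sigma_V$ is a red herring and plays no role. If you actually expand the right-hand side you obtain a morphism of the form
\[
(\bar R_U^*\otimes\id_{\overline{UV\bar U}})(\sigma_U\otimes\cdots)(\id_U\otimes T^*\otimes\cdots)(\sigma_U^*\otimes\cdots)\,\bar R_{UV\bar U},
\]
living in $\cM(m,m\stmryolt\overline{UV\bar U})$, not in $\cM(m,m\stmryolt U\bar V\bar U)$ as you seem to assume. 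The genuine key step, which the paper carries out, is to write $\bar R_{UV\bar U}$ through the intertwiner $K\colon H_U\otimes H_{\bar V}\otimes H_{\bar U}\to H_{\overline{UV\bar U}}$ as
\[
\bar R_{UV\bar U}=(\id_{UV\bar U}\otimes K)(\id_{UV}\otimes R_U\otimes\id_{\bar V\bar U})(\id_U\otimes\bar R_V\otimes\id_{\bar U})\bar R_U,
\]
and then apply the single zig-zag $(\bar R_U^*\otimes\id_U)(\id_U\otimes R_U)=\id_U$; the projection $\pi$ absorbs $K$. You do mention the zig-zag for $U$ and the identification $\overline{UV\bar U}\simeq U\bar V\bar U$, so once you drop the $\sigma_{\bar V}$ detour and make that decomposition of $\bar R_{UV\bar U}$ explicit, your argument becomes the paper's.
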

\begin{proof}
For $x = \bar{\xi} \otimes \zeta \in \bar{H}_U \otimes H_U$, denote $\bar{\zeta} \otimes \xi$ by $x^\dagger$.
We then have $S(\pi_G(x))^* = \pi_G(x^\dagger)$, see~\cite{NY3}.
Thus, the claim is equivalent to
\[
  \pi( x \tilderhd a^\dagger) = \pi((x^\dagger \tilderhd a))^\dagger.
\]

The left hand side is
\[
  \pi \left(\overline{(\xi \otimes \overline{\rho_V^{-1/2} \eta} \otimes \overline{\rho_U^{-1/2}\zeta})} \otimes (\sigma_U \otimes \id_{V \bar{U}})(\id_U \otimes T^* \otimes \id_{\bar{V} \bar{U}})(\id_U \otimes \bar{R}_V \otimes \id_{\bar{U}})(\sigma_U^* \otimes \id_{\bar{U}}) \bar{R}_U \right) .
\]
The right hand side is
\begin{multline*}
  \pi \Bigl( \overline{ \overline{(\rho_U^{-1/2} \zeta \otimes \rho_V^{-1/2} \eta \otimes \bar{\xi})}}  \otimes (\bar{R}_U^* \otimes \id_{\overline{UV\bar{U}}})(\sigma_U \otimes \id_{\bar{U} \overline{UV\bar{U}}})(\id_U \otimes T^* \otimes \id_{\bar{U} \overline{UV\bar{U}}})\\
  (\sigma_U^* \otimes \id_{V \bar{U}\overline{UV\bar{U}}}) \bar{R}_{UV\bar{U}} \Bigr).
\end{multline*}
The unitary $K\colon H_U \otimes H_{\bar{V}} \otimes H_{\bar{U}} \to H_{\overline{UV \bar{U}}}$, $\xi \otimes \bar{\eta} \otimes \bar{\zeta} \mapsto \overline{\zeta \otimes \eta \otimes \bar{\xi}}$ is an equivalence of representations, and
\[
  \bar{R}_{UV\bar{U}} = (\id_{UV\bar{U}} \otimes K)(\id_{UV} \otimes R_U \otimes \id_{\bar{V}\bar{U}})(\id_U \otimes \bar{R}_V \otimes \id_{\bar{U}}) \bar{R}_U.
\]
With this and the identity $(\bar{R}_U^* \otimes \id_U) (\id_U \otimes R_U) = \id_U$, we see that the claim is true.
\end{proof}

\begin{lemma}\label{ydstructurelemma5}
The Yetter--Drinfeld condition for $\rhd$ is satisfied: if $\alpha$ is the action of $G$ on $B_\cM$, then
\[
  \alpha( x \rhd a) = x_{(1)} a_{(1)} S(x_{(3)}) \otimes (x_{(2)} \rhd a_{(2)})
\]
for all $ x \in \cO(G)$ and for all $a \in \cB_\cM$.
\end{lemma}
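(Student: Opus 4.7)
The plan is to verify the identity at the level of the universal covers $\tilde\cO(G) \otimes \tilde\cB_\cM$ and descend via Lemma~\ref{ydstructurelemma1} to $\cO(G) \otimes \cB_\cM$. Fix a lift $a = \bar\eta \otimes T \in \bar H_V \otimes \cM(m, m \stmryolt V)$ and a matrix coefficient lift $x = \bar\xi \otimes \zeta \in \bar H_U \otimes H_U$. The only ingredients required are the explicit formulas for the $\cO(G)$-coaction $\alpha$, the iterated coproduct $\Delta^{(2)}$, the antipode $S$, and the definition~\eqref{eq:reg-alg-action-in-YD-str} of $\tilderhd$.

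First I would expand the coaction on $\cB_\cM$. Under the Tannaka--Krein correspondence $\cD_{B_\cM} \simeq \cM$ of right $\Rep(G)$-module categories, the coaction $\alpha$ on a generator $\bar\eta \otimes T \in \bar H_V \otimes \cM(m, m \stmryolt V)$ is governed by the left comodule structure on $\bar H_V$, producing a sum of matrix coefficients of $V$ tensored with elements $\bar\eta_i \otimes T$. Using the identity
\[ \Delta^{(2)}(\pi_G(\bar\xi \otimes \zeta)) = \sum_{j,k} \pi_G(\bar\xi \otimes \xi_j) \otimes \pi_G(\bar\xi_j \otimes \xi_k) \otimes \pi_G(\bar\xi_k \otimes \zeta) \]
together with the antipode formula for matrix coefficients on $\cO(G)$, the product $x_{(1)} a_{(1)} S(x_{(3)})$ on the right-hand side assembles into a single matrix coefficient of $U \otimes V \otimes \bar U$ in $\cO(G)$. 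Expanding $x_{(2)} \rhd a_{(2)}$ via~\eqref{eq:reg-alg-action-in-YD-str}, the morphism in $\cM(m, m \stmryolt U \otimes V \otimes \bar U)$ that pairs with this matrix coefficient is built from $\sigma_U$, a single slot of $T$, $\sigma_U^*$, and $\bar R_U$.

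On the left-hand side,~\eqref{eq:reg-alg-action-in-YD-str} presents $x \rhd a$ as $\overline{\xi \otimes \eta \otimes \overline{\rho_U^{-1/2}\zeta}}$ tensored with $(\sigma_U \otimes \id_V \otimes \id_{\bar U})(\id_U \otimes T \otimes \id_{\bar U})(\sigma_U^* \otimes \id_{\bar U}) \bar R_U$, a morphism $m \to m \stmryolt U \otimes V \otimes \bar U$; applying $\alpha$ then returns exactly a sum over matrix coefficients of $U \otimes V \otimes \bar U$ tensored with this same morphism. Termwise comparison of the two expansions yields the desired equality. The main obstacle is purely combinatorial bookkeeping: the triple coproduct of $x$, the coaction on $a$, and the irreducible decomposition used by $\pi$ each contribute a summation index, and one must keep them in step. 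No further categorical input beyond the half-braiding relations and the conjugate equations for $R_U, \bar R_U$ is required --- the definition~\eqref{eq:reg-alg-action-in-YD-str} is engineered so that the presence of $S$ in $S(x_{(3)})$ on the right-hand side corresponds exactly to the $\bar R_U$ factor appearing on the left.
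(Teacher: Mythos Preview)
Your proposal is correct and follows essentially the same route as the paper's proof: both expand the two sides on explicit lifts $x=\bar\xi\otimes\zeta$ and $a=\bar\eta\otimes T$, recognize that each side produces matrix coefficients of $U\otimes V\otimes\bar U$ paired with the same morphism $(\sigma_U\otimes\id_{V\bar U})(\id_U\otimes T\otimes\id_{\bar U})(\sigma_U^*\otimes\id_{\bar U})\bar R_U$, and match termwise. The only detail the paper makes more explicit is the conversion $\rho_j^{-1/2}S(u_{jj_0})=\rho_{j_0}^{-1/2}\bar u_{j_0j}$, obtained by choosing the basis $(\xi_i)$ to diagonalize $\rho_U$; this is exactly the ``antipode formula for matrix coefficients'' you invoke to turn $S(x_{(3)})$ into a coefficient of $\bar U$ that absorbs the $\rho_U^{-1/2}$ in~\eqref{eq:reg-alg-action-in-YD-str}.
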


\begin{proof}
Fix orthonormal basis $\{\xi_i\}_i$ and $\{\eta_k\}_k$ of $H_U$ and $H_V$, respectively, consisting of eigenvectors of the corresponding operators $\rho_U$ and $\rho_V$.
Let $\rho_i$ be the associated eigenvalues of $\rho_U$, so that we have $\rho_U \xi_i = \rho_i \xi_i$.
Take $x = u_{i_0j_0} = \pi_G( \bar{\xi}_{i_0} \otimes \xi_{j_0} ) \in \cO(G)$ and $a = \pi(\bar{\eta}_{k_0} \otimes T) \in \pi( \bar{H}_V \otimes \cM(m, m \stmryolt V))$.
Recall that the $G$-action on $\cB_\cM$ is defined by
\[
  \alpha (\pi(\bar{\eta}_{k_0} \otimes T)) = \sum_k v_{k_0 k} \otimes \pi( \bar{\eta}_k \otimes T) ,
\]
with $v_{k_1 k_2}$ denoting the matrix coefficients of $V$ with respect to the basis $\{\eta_k\}_k$.
From the definitions, we get that $x_{(1)} a_{(1)} S(x_{(3)}) \otimes (x_{(2)} \rhd a_{(2)})$ equals to
\begin{equation*}
  \sum_{i,j,k}\rho_{j}^{-1/2} u_{i_0i}v_{k_0k} S(u_{jj_0}) \otimes \pi \left( \overline{(\xi_i \otimes \eta_k \otimes \bar{\xi}_j )} \otimes (\sigma_U \otimes \id_{V \bar{U}})(\id_U \otimes T \otimes \id_{\bar{U}})(\sigma_U^* \otimes \id_{\bar{U}}) \bar{R}_U  \right)
\end{equation*}
The contragredient representation of $U$ has $\{u_{ij}^*\}_{i,j}$ as matrix coefficients.
As for the dual representation $\bar{U}$, the matrix coefficients are $\{ \bar{u}_{ij} = \rho_i^{1/2}\rho_j^{-1/2} u_{ij}^* = \rho_i^{1/2}\rho_j^{-1/2} S(u_{ji})\}_{i,j}$.
Therefore, $\rho_j^{-1/2} S(u_{jj_0}) = \rho_{j_0}^{-1/2} \bar{u}_{j_0j}$, and the expression above is the same as
\[
  \rho_{j_0}^{-1/2} \sum_{i,j,k} u_{i_0i}v_{k_0k} \bar{u}_{jj_0} \otimes \pi \left( \overline{(\xi_i \otimes \eta_k \otimes \bar{\xi}_j )} \otimes (\sigma_U \otimes \id_{V \bar{U}})(\id_U \otimes T \otimes \id_{\bar{U}})(\sigma_U^* \otimes \id_{\bar{U}}) \bar{R}_U  \right).
\]
This is the right hand side of the equation in the Lemma; let us compute the left hand side.

As $u_{i_0j_0} \rhd (\bar{\eta}_{k_0} \otimes T) \in \pi ( \bar{H}_{UV\bar{U}} \otimes \cM(m, m \stmryolt (U \otimes V \otimes \bar{U})))$,
\[
  \alpha ( u_{i_0j_0} \rhd (\bar{\eta}_{k_0} \otimes T)) = (U\otimes V \otimes \bar{U})_{21}^*(1 \otimes (u_{i_0j_0} \rhd (\bar{\eta}_{k_0} \otimes T))).
\]
Expanding on the definitions, we see that it is the same as $x_{(1)} a_{(1)}
  S(x_{(3)}) \otimes (x_{(2)} \rhd a_{(2)})$.
\end{proof}

Lemmas~\ref{ydstructurelemma1}--\ref{ydstructurelemma5} say that the closure $B_\cM$ of $\cB_\cM$ becomes then a Yetter--Drinfeld $G$-$C^*$-algebra.

\subsection{Duality}

Our next goal is to prove that, up to isomorphisms, the above constructions are inverse to each other.
Given a centrally pointed bimodule category $\cM$, let us write $\cD_\cM = \cD_{B_\cM}$.
Then by the duality for $G$-C$^*$-algebras, there is an equivalence of right $\Rep(G)$-module $C^*$-categories, $\Psi\colon \cM \to \cD_\cM$~\cite{n1}.

\begin{proposition}\label{prop:bimod-YD-bimod}
The right module category equivalence functor $\Psi$ extends to a central functor.
\end{proposition}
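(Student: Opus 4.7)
The plan is to combine Proposition~\ref{prop:strictify-mod-ftr} with Proposition~\ref{prop:equiv-to-triv-braiding-cat}: since $\Psi\colon \cM \to \cD_\cM$ is already a right $\Rep(G)$-module equivalence sending the central generator $m$ to $B_\cM$, the only missing piece is the compatibility~\eqref{eq:right-mod-ftr-compat-with-braiding} between the central structure $\sigma$ on $(\cM,m)$ and the tautological central structure $\sigma'$ on $(\cD_\cM,B_\cM)$ produced in Section~\ref{sec:duality-for-YD-algs}. Once this is established, Proposition~\ref{prop:strictify-mod-ftr} supplies a strict bimodule functor on the trivialisations, and Proposition~\ref{prop:equiv-to-triv-braiding-cat} transports it back to a central functor extending $\Psi$.

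To carry this out, I would first unpack $\sigma'$ explicitly. By Definition~\ref{yetterdrinfelddef1} and the discussion following Proposition~\ref{yetterdrinfeldprop4}, both $U \stmryogt B_\cM$ and $B_\cM \stmryolt U$ are represented by the equivariant $B_\cM$-bimodule $H_U \otimes B_\cM$, with left action $\pi_U(b) = \sum_{ij} m^U_{ij} \otimes (u_{ij} \rhd b)$ from Proposition~\ref{prop:equivariancecanonicalleftaction}, and $\sigma'_U$ is the identity under this identification. Correspondingly, the conjugation $\Sigma'_{U;V,W}$ of~\eqref{eq:br-map} acts on right-$B_\cM$-linear maps $H_V \otimes B_\cM \to H_W \otimes B_\cM$ essentially by $S \mapsto \id_{H_U} \otimes S$, with left $B_\cM$-linearity constrained by the bimodule structures $\pi_{U \otimes V}$ and $\pi_{U \otimes W}$.

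The crux is then the observation that the $\cO(G)$-action $\rhd$ on the regular subalgebra $\cB_\cM$ was \emph{defined} from $\sigma$ through~\eqref{eq:reg-alg-action-in-YD-str}, while on the codomain side the \emph{same} action is what encodes $\sigma'$ via $\pi_U$. Evaluating both sides of~\eqref{eq:right-mod-ftr-compat-with-braiding} on the generators $\bar\eta \otimes T$ of $\cB_\cM$, they reduce to the same expression in $\cM(m \stmryolt U \otimes V, m \stmryolt U \otimes W)$ by a direct computation using the naturality of $\sigma$ in $U$ and the braid relation $\sigma_{U \otimes V} = (\sigma_U \stmryolt \id_V)(\id_U \stmryogt \sigma_V)$.

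The main obstacle is coherence bookkeeping rather than any genuinely new input. The natural isomorphisms $F_{2;m,U \otimes V}$ identifying $\Psi(m \stmryolt U \otimes V)$ with $\Psi(m \stmryolt V) \stmryolt U$ are given by the balanced tensor product map $S_{U,V}$ of Proposition~\ref{yetterdrinfeldprop2}, and its associativity (Proposition~\ref{yetterdrinfeldprop3}) must be used to paste the two sides of~\eqref{eq:right-mod-ftr-compat-with-braiding} into a common diagram. Once the identity is verified on the dense regular part $\cB_\cM$, it extends by $B_\cM$-bilinearity and norm continuity to all of $H_U \otimes B_\cM$, completing the construction of the central extension of $\Psi$.
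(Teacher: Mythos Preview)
Your strategy is exactly the paper's: reduce via Propositions~\ref{prop:equiv-to-triv-braiding-cat} and~\ref{prop:strictify-mod-ftr} to verifying the compatibility~\eqref{eq:right-mod-ftr-compat-with-braiding} between $\Sigma$ and $\Sigma'$ under $\Psi$, and argue this holds because the $\cO(G)$-action $\rhd$ on $\cB_\cM$ was \emph{defined} from $\sigma$ by~\eqref{eq:reg-alg-action-in-YD-str} while $\Sigma'$ is built from $\pi_U$, which in turn is $\rhd$. The paper carries out precisely this computation, using the explicit formula~\eqref{eq:equiv-from-mod-cat-to-mod-cat-of-alg} for $\Psi$ on morphisms to expand $\Psi(\Sigma_{U;V,W}(T))$ and match it against $\Sigma'_{U;V,W}(\Psi(T)) = \sum_{i,j} m^U_{ij}\otimes T_l\otimes(u_{ij}\rhd b_l)$.

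One point of confusion to flag: the map $S_{U,V}$ of Proposition~\ref{yetterdrinfeldprop2} is \emph{not} the right-module structure isomorphism $F_{2}$ of the functor $\Psi$. Rather, $S_{U,V}$ implements the left action and the (trivialized) central structure on $\cD_\cM$: it identifies $U\stmryogt(B_\cM\stmryolt V) = (H_V\otimes B_\cM)\otimes_{B_\cM}(H_U\otimes B_\cM)$ with $B_\cM\stmryolt(U\otimes V)$. The actual $F_{2}$ of $\Psi$ is essentially a flip map (cf.\ Remark~\ref{rem:leftcoactionsandrightmodules}), and in the trivialized picture $\tilde\cM\to\tilde\cD_\cM=\cC_{B_\cM}$ it is the identity. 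Correspondingly, Proposition~\ref{yetterdrinfeldprop3} plays no role here; the paper's proof does not invoke it. This mislabeling is harmless for the argument, but it muddies your description of what the ``coherence bookkeeping'' actually is.
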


\begin{proof}
By Proposition~\ref{prop:equiv-to-triv-braiding-cat}, it is enough to construct a corresponding extension from $\tilde\cM$ to $\tilde\cD_\cM$.

First we note that $\tilde\cD_\cM$ is the category $\cC_\cM = \cC_{B_\cM}$ in~\cite{NY3}, i.e., the idempotent completion of $\cC$ with respect to the morphism sets
\[
  \cC_{B_\cM}(U, V) = \Hom_{\tilde{\cD}_\cM}(H_U \otimes B_\cM, H_V \otimes B_\cM),
\]
the right-hand-side meaning $G$-equivariant Hilbert $B_\cM$-module maps.
Then, the right module category equivalence $\Psi\colon\cM \to \cC_{B_\cM}$ is characterized by $m \stmryolt U \mapsto U$ at the level of objects, and at the level of morphisms by
\begin{equation}\label{eq:equiv-from-mod-cat-to-mod-cat-of-alg}
  \Psi(T) = \sum_{i,j} \theta_{\zeta_j, \xi_i} \otimes \pi \left( \overline{(\zeta_j \otimes \overline{\rho^{-1/2} \xi_i})} \otimes (T \otimes \id_{\bar U}) (\id_m \stmryolt \bar{R}_U) \right)
\end{equation}
for $T \in \cM(m \stmryolt U, m \stmryolt V)$, where $(\xi_i)_i$ is an orthonormal basis of $H_U$, $(\zeta_j)_j$ is an orthonormal basis of $H_V$, and $\theta_{\zeta, \xi}$ is the linear map $\eta \mapsto (\eta, \xi) \zeta$, see~\cite{NY3}.

Now let us denote the central structure on $\cM = (\cM,m)$ by $\sigma$, and the one on $\cC_\cM$ by $S$.
Let us fix $T \in \cM(m_1 \stmryolt V,m_1 \stmryolt W)$, and take orthonormal bases $(\xi_i)_i$ on $H_U$, $(\zeta_l)_l$ on $H_V$, and $(\eta_k)_k$ on $H_W$.

On the one hand, the morphism $\Psi \left( (\sigma \otimes \id_W) (\id_U \otimes T) (\sigma^{-1} \otimes \id_V) \right)$ can be expanded as
\[
  \sum_{i,j,k,l} \theta_{\xi_i \otimes \eta_k,\xi_j \otimes \zeta_l} \otimes \pi \left( \overline{(\xi_i \otimes \eta_k \otimes \overline{\rho^{-1/2}(\xi_j \otimes \zeta_l)} )} \otimes \left( (\sigma \otimes \id_W) (\id_U \otimes T) (\sigma^{-1} \otimes \id_V) \otimes \id_{\overline{UV}} \right) \bar{R}_{UV} \right),
\]
which is equal to
\[
  \sum  m^U_{ij} \otimes \theta_{\eta_k,\zeta_l} \otimes \left( u_{ij} \rhd \pi \left( \overline{(\eta_k \otimes \overline{\rho^{-1/2 }\zeta_l}}) \otimes (T \otimes \id_{\bar{V}}) \bar{R}_V \right) \right).
\]
On the other, writing $\Psi(T) = \sum_l \theta_l \otimes b_l$ with $\theta_l \in B(H_V, H_W)$ and $b_l \in B_\cM$, we have
\[
  (S \otimes \id_W) \left(\id_U \otimes \sum_l T_l \otimes b_l\right) (S^{-1} \otimes \id_V) = \sum_{l,i,j} m^U_{ij} \otimes T_l \otimes (u_{ij} \rhd b_l).
\]
Comparing these we get
\[
  \Psi \left( (\sigma \otimes \id_W) (\id_U \otimes T) (\sigma^{-1} \otimes \id_V) \right) = (S \otimes \id_W) (\id_U \otimes \Psi(T)) (S^{-1} \otimes \id_V),
\]
which gives the desired extension by Proposition~\ref{prop:strictify-mod-ftr}.
\end{proof}

Going the other way, let $B$ be a Yetter--Drinfeld $G$-$C^*$-algebra, and consider the new algebra $B_{\cD_B}$ we get by the above procedure.
Let us denote their regular subalgebras by $\cB$ and $\cB_\cD$.
Then the isomorphism of $G$-algebras $\lambda\colon B_{\cD_B} \to B$ is given by
\[
  \lambda (\bar{\eta} \otimes T) = (\eta^* \otimes \id_B) (T)
\]
for $\bar\eta \in \bar H_V$ and $T \in (H_V \otimes B)^G \simeq \cD_B(B, B \stmryolt V)$, where $\eta^*$ is the functional $\xi \mapsto (\xi, \eta)$ on $H_V$.

\begin{proposition}\label{yetterdrinfeldprop85}
$\lambda$ is $\hat{G}$-equivariant.
\end{proposition}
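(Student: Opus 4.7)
The strategy is a direct computation: fix $x \in \cO(G)$ and $a \in \cB_{\cD_B}$, and verify $\lambda(x \rhd a) = x \rhd \lambda(a)$, where the left $\rhd$ is the action on $B_{\cD_B}$ given by~\eqref{eq:reg-alg-action-in-YD-str} and the right is the original Yetter--Drinfeld action on $B$. By linearity and Lemma~\ref{ydstructurelemma1}, it suffices to compute on the universal cover: take $x = \bar\xi_{i_0} \otimes \xi_{j_0} \in \bar H_U \otimes H_U$ (whose image under $\pi_G$ is $u_{i_0 j_0}$) and $a = \bar\eta_{k_0} \otimes T$ with $T \in (H_V \otimes B)^G \simeq \cD_B(B, B \stmryolt V)$ in an orthonormal basis of eigenvectors of $\rho_U, \rho_V$.

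The first step is to make the half-braiding $\sigma_U \colon U \stmryogt B \to B \stmryolt U$ on $\cD_B$ explicit. Under the canonical identifications $B \otimes_B (H_U \otimes B) \simeq H_U \otimes B \simeq B \stmryolt U$, the map $\sigma_U$ is the identity on underlying right Hilbert modules; the only nontrivial content is that it converts the left $B$-action on the source (multiplication on the first tensor factor) into $\pi_U$ on the target. This is precisely the content observed at the end of Section~4.1.

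The second step is to plug this into the formula
\[
  (\sigma_U \otimes \id_V \otimes \id_{\bar U})(\id_U \otimes T \otimes \id_{\bar U})(\sigma_U^{-1} \otimes \id_{\bar U}) \bar R_U.
\]
Writing $T = \sum_l \zeta_l \otimes b_l$ with $\zeta_l$ from an orthonormal basis of $H_V$, and using $\bar R_U = \sum_i \rho_U^{1/2}\xi_i \otimes \bar\xi_i$, the composition produces an equivariant morphism from $B$ into $H_{U \otimes V \otimes \bar U} \otimes B$. The crucial point is that conjugating $\id_U \otimes T \otimes \id_{\bar U}$ by $\sigma_U^{\pm 1}$ replaces the identity left action on $B$ by $\pi_U$, and therefore the coefficients involve $u_{ij} \rhd b_l$ (the original Yetter--Drinfeld action on $B$). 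Applying $\lambda$ then contracts the $H_{UV\bar U}$ factor against $\xi_{i_0} \otimes \eta_{k_0} \otimes \overline{\rho_U^{-1/2}\xi_{j_0}}$, and after using $\sum_i \bar u_{i_0 i} \xi_i = \rho_U^{1/2}\xi_{i_0}$ (or rather the dual version), the scalar prefactors collapse and one obtains exactly the expansion of $u_{i_0 j_0} \rhd \lambda(\bar\eta_{k_0}\otimes T)$ in terms of the $\hat G$-action on $B$, via the formula $\pi_U(b) = \sum_{ij} m^U_{ij} \otimes (u_{ij} \rhd b)$.

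The third step is to identify the outcome with $x \rhd \lambda(a)$, read off using the definition of $\lambda$ and the formula for the Yetter--Drinfeld action on $B$ applied to the $k_0$-th component of $T$. Both sides match term by term, giving the desired equality. The computation is essentially the reverse of the one establishing that $\cD_B$ carries a half-braided central structure built from the Yetter--Drinfeld action on $B$, so no new conceptual input is required. The main obstacle is purely bookkeeping: tracking the various identifications of $U \stmryogt B$ with $H_U \otimes B$, the placement of $\rho_U^{1/2}$ factors, and the conjugate Hilbert space conventions. Provided these are handled consistently (as in the analogous computations of Lemmas~\ref{ydstructurelemma3} and~\ref{ydstructurelemma5}), the equality falls out directly.
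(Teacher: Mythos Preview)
Your plan is correct and follows essentially the same route as the paper's proof: write $T = \sum_k \zeta_k \otimes b_k$, use that the half-braiding $\sigma_U$ on $\cD_B$ is the identity on underlying Hilbert modules so that conjugation by $\sigma_U$ converts the tautological left action into $\pi_U$ and produces the coefficients $u_{ij} \rhd b_k$, then apply $\lambda$ and match with $u_{i_0 j_0} \rhd b_{k_0}$. The only cosmetic difference is that the paper handles the $\rho_U^{1/2}$ bookkeeping by directly invoking the explicit form~\eqref{eq:duality-mors} of $\bar R_U$ rather than via identities for $\bar u_{ij}$, which is slightly cleaner here.
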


\begin{proof}
Fix an orthonormal basis $(\zeta_k)_k$ in $H_V$ and take $T \in \cD_B(B, B \stmryolt V)$, represented by $\sum_k \zeta_k \otimes b_k \in H_V \otimes B$ up to the above correspondence.

The central structure $S$ on $\cD_B$ satisfies
\begin{equation*}
  (S \otimes \id_{V\bar{U}}) (\id_U \otimes T \otimes \id_{\bar{U}}) (S^{-1} \otimes \id_{\bar{U}}) = \sum_{i, j, k} m^U_{ij} \otimes \zeta_k \otimes 1 \otimes (u_{ij} \rhd b_k)
\end{equation*}
in $\cD_B(B \stmryolt U \otimes \bar{U}, B \stmryolt U \otimes V \otimes \bar{U})$.

From our definition of the action $\rhd$, we see that $u_{i_0j_0} \rhd \pi( \bar{\zeta}_{k_0} \otimes T)$ is equal to
\begin{multline*}
  \pi \left( (\bar{\xi}_{i_0} \otimes \xi_{j_0}) \tilderhd (\bar{\zeta}_{k_0} \otimes T) \right) \\
  = \pi \left( \overline{(\xi_{i_0} \otimes \zeta_{k_0} \otimes \overline{\rho_U^{-1/2} \xi_{j_0}})} \otimes (S \otimes \id_{V\bar{U}}) (\id_U \otimes T \otimes \id_{\bar{U}}) (S^{-1} \otimes \id_{\bar{U}}) \bar{R}_U  \right) \\
  = \pi \left( \overline{(\xi_{i_0} \otimes \zeta_{k_0} \otimes \overline{\rho_U^{-1/2} \xi_{j_0}})} \otimes  \left( \sum_{i, j, k} m^U_{ij} \otimes \zeta_k \otimes 1 \otimes (u_{ij} \rhd b_k) \right) \bar{R}_U  \right).
\end{multline*}
From~\eqref{eq:duality-mors}, this is equal to
\[
  \sum_{i,j,k} \overline{(\xi_{i_0} \otimes \zeta_{k_0} \otimes \overline{\rho_U^{-1/2} \xi_{j_0}})} \otimes ( \xi_i \otimes \zeta_k \otimes \overline{\rho_U^{1/2} \xi_j }) \otimes (u_{ij} \rhd b_k).
\]
We thus obtain
\[
  \lambda \left(u_{i_0j_0} \rhd \pi (\bar{\zeta}_{k_0} \otimes T) \right) = u_{i_0j_0} \rhd b_{k_0} = u_{i_0, j_0} \rhd \lambda(\bar \zeta_{k_0} \otimes T),
\]
establishing the claim.
\end{proof}

\subsection{Moduli of module functors}

Let $\cM_1$ and $\cM_2$ be two centrally pointed $\Rep(G)$-bimodule categories, with respective generators $m_1$ and $m_2$ and half-braidings $\sigma$ and $\sigma'$, and denote their corresponding Yetter--Drinfeld $G$-$C^*$-algebras by $B_1$ and $B_2$, respectively.
Consider a right $\Rep(G)$-module functor $F\colon \cM_1 \to \cM_2$ such that $F(m_1) = m_2$.
Let us work out the condition on $F$ such that the induced $G$-equivariant homomorphism $f\colon B_1 \to B_2$ becomes a $D(G)$-equivariant homomorphism.

Restricting to the regular subalgebras, $f$ is given by the collection of
\[
  \bar{H}_U \otimes \cM_1(m_1, m_1 \stmryolt U) \to \bar{H}_U \otimes \cM_2(m_2, m_2 \stmryolt U), \quad T \mapsto \id_{\bar{H}_U} \otimes F_2^{-1} F(T)
\]
for the irreducible representations $U$.
In view of~\eqref{eq:reg-alg-action-in-YD-str}, $f$ will be $\hat{G}$-equivariant if and only if
\begin{multline*}
  (F_{2; m,U V \bar U})^{-1} F \mathopen{}\left( (\sigma_U \stmryolt \id_V \otimes \id_{\bar{U}}) (\id_U \stmryogt T \stmryolt \id_{\bar{U}} ) (\sigma_U^{-1} \stmryolt \id_{\bar{U}} ) (\id_{m_1} \stmryolt \bar{R}_U) \right) \\
  = (\sigma_U' \stmryolt \id_V \otimes \id_{\bar{U}}) (\id_U \stmryogt F_{2;m,V}^{-1}F(T) \stmryolt \id_{\bar{U}} ) (\sigma_U'^{-1} \stmryolt \id_{\bar{U}} ) (\id_{m_2} \stmryolt \bar{R}_U)
\end{multline*}

As before, consider the braiding maps $\Sigma_{U;V} \colon \cM_1(m_1, m_1 \stmryolt V) \to \cM_1(m_1 \stmryolt U, m_1 \stmryolt U \stmryolt V)$ given by $\Sigma_{U; 1, V}$ in~\eqref{eq:br-map}.
We will write the analogous braiding maps on $\cM_2$ as $\Sigma'$.
Using
\[
  \Sigma_{U;V}(T) = (\id_{m_1 \stmryolt U \stmryolt V} \stmryolt R_U^*) (\sigma_U \stmryolt \id_{V \bar{U} U})(\id_U \stmryogt T \stmryolt \id_{U \bar{U}}) (\sigma_U^{-1} \stmryolt \id_{\bar{U}U})(\id_{m_1} \stmryolt \bar{R}_U \otimes \id_U),
\]
we see that $f$ will be $\hat{G}$-equivariant if and only if
\begin{equation}\label{eq:F-ad-sigma}
  F_{2;m_1,UV}^{-1} F(\Sigma_{U;V}(T)) F_2 = \Sigma_{U;V}'(F_{2;m,V}^{-1}F(T))
\end{equation}
for all $U,V \in \Rep(G)$ and $T \in \cM_1(m_1, m_1 \stmryolt V)$.

\begin{proposition}\label{yetterdrinfeldprop6}
Let $F\colon \cM_1 \to \cM_2$ be a right C$^*$-$\Rep(G)$-module functor between centrally pointed bimodule categories.
If $F$ satisfies~\eqref{eq:F-ad-sigma}, then there is a strict bimodule functor $\tilde F \colon \tilde\cM_1 \to \tilde\cM_2$ between the categories obtained by trivialization of the central structures, that is naturally isomorphic to $F$ up to the equivalence of Proposition~\ref{prop:equiv-to-triv-braiding-cat}.
\end{proposition}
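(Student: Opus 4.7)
The plan is to reduce the proposition to Propositions~\ref{prop:strictify-mod-ftr} and~\ref{prop:equiv-to-triv-braiding-cat}. Proposition~\ref{prop:strictify-mod-ftr} produces a strict bimodule functor $\tilde F$ from any right module functor satisfying~\eqref{eq:right-mod-ftr-compat-with-braiding}, and Proposition~\ref{prop:equiv-to-triv-braiding-cat} supplies the equivalence $\tilde\cM_i \simeq \cM_i$ under which the natural isomorphism with $F$ is to be interpreted. Since~\eqref{eq:F-ad-sigma} is precisely the special case of~\eqref{eq:right-mod-ftr-compat-with-braiding} with $V = 1_\cC$, the essential step is to upgrade this special case to arbitrary $V$.

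To effect the upgrade I would use Frobenius reciprocity in the right $\Rep(G)$-module category $\cM_1$: for $T \in \cM_1(m_1 \stmryolt V, m_1 \stmryolt W)$, set
\[
  T^\sharp = (T \stmryolt \id_{\bar V})(\id_{m_1} \stmryolt \bar R_V) \in \cM_1(m_1, m_1 \stmryolt (W \otimes \bar V)),
\]
so that $T = (\id_{m_1 \stmryolt W} \stmryolt R_V^*)(T^\sharp \stmryolt \id_V)$ by the conjugate equations. Since $\sigma_U$ and the right action $\id \stmryolt R_V^*$ affect disjoint tensor factors, a direct bookkeeping gives
\[
  \Sigma_{U;V,W}(T) = (\id_{m_1 \stmryolt (U \otimes W)} \stmryolt R_V^*) \bigl( \Sigma_{U;1_\cC, W \otimes \bar V}(T^\sharp) \stmryolt \id_V \bigr),
\]
together with the analogous identity in $\cM_2$ for $S = F_{2;m_1,W}^{-1} F(T) F_{2;m_1,V}$ with $\Sigma'$ replacing $\Sigma$. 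The right module coherence $F_2$ intertwines these Frobenius transforms, yielding $F_{2;m_1, W \otimes \bar V}^{-1} F(T^\sharp) = S^\sharp$. Applying~\eqref{eq:F-ad-sigma} to $T^\sharp$ and substituting through both identities then produces~\eqref{eq:right-mod-ftr-compat-with-braiding} for $T$.

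With the full compatibility in hand, Proposition~\ref{prop:strictify-mod-ftr} delivers the required strict bimodule functor $\tilde F \colon \tilde\cM_1 \to \tilde\cM_2$. For the identification with $F$, Proposition~\ref{prop:equiv-to-triv-braiding-cat} provides central equivalences $F_{\sigma_i} \colon \tilde\cM_i \to \cM_i$ sending the object represented by $V \in \cC$ to $m_i \stmryolt V$. The composites $F_{\sigma_2} \circ \tilde F$ and $F \circ F_{\sigma_1}$ send such an object to $m_2 \stmryolt V$ and $F(m_1 \stmryolt V)$ respectively, and the family $\{F_{2;m_1,V} \colon m_2 \stmryolt V \to F(m_1 \stmryolt V)\}_V$ constitutes the desired natural isomorphism, its naturality on morphisms being a direct consequence of the defining formula $\tilde F(T) = F_{2;m_1,W}^{-1} F(T) F_{2;m_1,V}$ from Proposition~\ref{prop:strictify-mod-ftr}.

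The main obstacle is the promotion step in the middle paragraph: although conceptually a routine application of Frobenius reciprocity, one must carefully track the interplay of the half-braiding $\sigma$, the duality morphisms $R_V, \bar R_V$, and the module-functor coherence $F_2$ under the chosen strictness conventions.
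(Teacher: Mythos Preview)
Your proposal is correct and follows essentially the same route as the paper: both use Frobenius reciprocity to replace a general $T \in \cM_1(m_1 \stmryolt V, m_1 \stmryolt W)$ by a morphism out of $m_1$ (your $T^\sharp$ is the paper's $S$, up to a typographical slip in the paper where $\bar W$ should read $\bar V$), apply~\eqref{eq:F-ad-sigma} to this, and then invoke Proposition~\ref{prop:strictify-mod-ftr}. You also spell out the natural isomorphism via $F_2$ under the equivalences of Proposition~\ref{prop:equiv-to-triv-braiding-cat}, which the paper leaves implicit.
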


\begin{proof}
From the assumption we actually have
\[
  F_{2;m_1,UW}^{-1} F(\Sigma_{U;V}(T)) F_{2;m_1,UV} = \Sigma_{U;V}'(F_{2;m,V}^{-1}F(T)) \quad (T \in \cM_1(m_1 \stmryolt V, m_1 \stmryolt W)).
\]
Indeed, given $T \in \cM_1(m_1 \stmryolt V, m_1 \stmryolt W)$, we can take $S = (T \stmryolt \id_{\bar W}) (\id_{m_1} \stmryolt \bar R_W)$ and combine~\eqref{eq:F-ad-sigma} with the conjugate equation to get this claim.
Then the claim follows from Proposition~\ref{prop:strictify-mod-ftr}.
\end{proof}

Finally, let us compare different choices of bimodule functors leading to the same homomorphisms.
Suppose that $F, F' \colon \cM \to \cM'$ are central functors that are naturally isomorphic.
Then the induced homomorphisms $B_\cM \to B_{\cM'}$ agree.
Indeed, we just need to look at the induced maps
\[
  \cM(m, m \stmryolt U) \to \cM'(m', m' \stmryolt U)
\]
which are given by $T \mapsto (F_0^{-1} \otimes \id_U) F_2^{-1} F(T) F_0$ and a similar formula for $F'$.
Using the commutativity of diagrams in Definition~\ref{def:nat-trans-bimod-functors}, we see that these maps indeed agree.

\begin{proposition}
Suppose that the induced homomorphisms $B_\cM \to B_{\cM'}$ agree.
Then $F$ and $F'$ are naturally isomorphic as bimodule functors.
\end{proposition}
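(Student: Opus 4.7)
The plan is to use the Tannaka--Krein duality for $G$-C$^*$-algebras to produce a natural isomorphism of pointed right module functors, and then promote it to a natural isomorphism of central bimodule functors by exploiting the centrality of $F$ and $F'$.

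Forgetting the left module and central structures, $F$ and $F'$ are pointed right C$^*$-$\Rep(G)$-module functors $\cM \to \cM'$. By the equivalence recalled in Section~\ref{sec:preliminaries} between unital $G$-C$^*$-algebras and pointed cyclic right C$^*$-$\Rep(G)$-module categories \citelist{\cite{ostrik03}\cite{ydk1}\cite{n1}}, the assumption that they induce the same $G$-equivariant $*$-homomorphism $B_\cM \to B_{\cM'}$ yields a natural isomorphism $\alpha \colon F \to F'$ of pointed right module functors satisfying $\alpha_m \circ F_0 = F'_0$. This gives the second and third diagrams of Definition~\ref{def:nat-trans-bimod-functors} at once: the square for the right module structure is the naturality of $\alpha$ as a right module natural transformation, and the triangle is pointedness.

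It then remains to verify the first square, compatibility with the left action. By cyclicity of $m$, together with naturality of $\alpha$ and the compatibility of ${\tensor[_2]F{}}$ with associativity and the right module structure, it suffices to check this square at $X = m$. The centrality condition~\eqref{eq:centr-br-ftr} for $F$ rearranges to
\[
  {\tensor[_2]F{_{V,m}}} = F(\sigma_V)^{-1} \circ F_2 \circ (F_0 \stmryolt \id_V) \circ \sigma'_V \circ (\id_V \stmryogt F_0^{-1}),
\]
and analogously for $F'$. Substituting these expressions into the square at $X = m$, the identity reduces, by (i) naturality of $\alpha$ applied to the morphism $\sigma_V \colon V \stmryogt m \to m \stmryolt V$, (ii) the right module compatibility $\alpha_{m \stmryolt V} \circ F_2 = F'_2 \circ (\alpha_m \stmryolt \id_V)$, and (iii) the pointedness identity $\alpha_m \circ F_0 = F'_0$, to a tautology.

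The main obstacle is bookkeeping of $F_0$, $F'_0$ and the various associativity morphisms. A cleaner presentation would first invoke Propositions~\ref{prop:equiv-to-triv-braiding-cat} and~\ref{prop:strictify-mod-ftr} to replace $F$ and $F'$ by their strict trivialized counterparts $\tilde F, \tilde F'$, where $F_0 = F'_0 = \id$ and the half-braidings of the source and target are identities; in this setting, the computation above collapses to a very short diagram chase and the left module compatibility of $\alpha$ is essentially forced by its right module compatibility.
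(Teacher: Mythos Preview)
Your proposal is correct. The main argument takes a more hands-on route than the paper's: you first extract $\alpha$ from the Tannaka--Krein duality for pointed right module categories, and then upgrade it to a bimodule natural isomorphism by a direct diagram chase, using~\eqref{eq:centr-br-ftr} to express ${\tensor[_2]F{_{V,m}}}$ in terms of $F_2$, $F_0$, $\sigma_V$, $\sigma'_V$ and reducing the left-module square at $X=m$ to pointedness plus the right-module square. The paper instead argues that any central functor $F$ is naturally isomorphic, as a bimodule functor, to the functor $F_{f_F}\colon \cD_\cM \to \cD_{\cM'}$ induced by its associated homomorphism $f_F$; this is checked by passing to the trivialized categories $\tilde\cM$, $\tilde\cM'$ via Propositions~\ref{prop:equiv-to-triv-braiding-cat} and~\ref{prop:strictify-mod-ftr}, where it becomes tautological, and the conclusion then follows since $f_F = f_{F'}$ forces $F_{f_F} = F_{f_{F'}}$. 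Your final paragraph, proposing to strictify via trivialization before doing the chase, is essentially the paper's organization. What your direct approach buys is that it avoids invoking Proposition~\ref{prop:bimod-YD-bimod} and the round trip through $\cD_\cM$; what the paper's approach buys is that the bookkeeping with $F_0$, $\Psi''$, and the reduction from general $X$ to $X=m$ disappears once one is in $\tilde\cM$.
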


\begin{proof}
Let  $f_F \colon B_\cM \to B_{\cM'}$ denote the homomorphism induced by $F$.
By Proposition~\ref{prop:bimod-YD-bimod}, the assertion follows once we can check that the bimodule functor $F_f \colon \cD_\cM \to \cD_{\cM'}$ induced by $f_F$ is naturally isomorphic to $F$ up to the bimodule equivalence $\cM \to \cD_\cM$.
Looking at the corresponding functor $\tilde \cM \to \tilde \cM'$ induced by $F$, the claim is tautological.
Then we get the claim by Proposition~\ref{prop:equiv-to-triv-braiding-cat}.
\end{proof}

\raggedright
\begin{bibdiv}
\begin{biblist}

\bib{AV1}{misc}{
      author={Antoun, Jamie},
      author={Voigt, Christian},
       title={On bicolimits of {C$^*$}-categories},
         how={preprint},
        date={2020},
      eprint={\href{http://arxiv.org/abs/2006.06232}{\texttt{arXiv:2006.06232
  [math.OA]}}},
}


\bib{BZBJ2018}{article}{
   author={Ben-Zvi, David},
   author={Brochier, Adrien},
   author={Jordan, David},
   title={Integrating quantum groups over surfaces},
   journal={J. Topol.},
   volume={11},
   date={2018},
   number={4},
   pages={874--917},
   issn={1753-8416},
   review={\MR{3847209}},
   doi={10.1112/topo.12072},
}

\bib{ydk1}{article}{
      author={De~Commer, Kenny},
      author={Yamashita, Makoto},
       title={Tannaka-{K}re\u\i n duality for compact quantum homogeneous
  spaces. {I}. {G}eneral theory},
        date={2013},
        ISSN={1201-561X},
     journal={Theory Appl. Categ.},
      volume={28},
       pages={No. 31, 1099\ndash 1138},
      eprint={\href{http://arxiv.org/abs/1211.6552}{\texttt{arXiv:1211.6552
  [math.OA]}}},
      review={\MR{3121622}},
}

\bib{FreslonTaipeWang}{article}{
   author={Freslon, Amaury},
   author={Taipe, Frank},
   author={Wang, Simeng},
   title={Tannaka--Krein reconstruction and ergodic actions of easy quantum
   groups},
   journal={Comm. Math. Phys.},
   volume={399},
   date={2023},
   number={1},
   pages={105--172},
   issn={0010-3616},
   review={\MR{4567370}},
   doi={10.1007/s00220-022-04555-y},
}

\bib{HHN1}{article}{
   author={Habbestad, Erik},
   author={Hataishi, Lucas},
   author={Neshveyev, Sergey},
   title={Noncommutative Poisson boundaries and Furstenberg-Hamana
   boundaries of Drinfeld doubles},
   language={English, with English and French summaries},
   journal={J. Math. Pures Appl. (9)},
   volume={159},
   date={2022},
   pages={313--347},
   issn={0021-7824},
   review={\MR{4377998}},
   doi={10.1016/j.matpur.2021.12.006},
}

\bib{HY2022}{article}{
  title={Injectivity for algebras and categories with quantum symmetry},
  author={Hataishi, Lucas}, 
  author= {Yamashita, Makoto},
  journal={arXiv preprint arXiv:2205.06663},
  year={2022}
}

\bib{n1}{article}{
      author={Neshveyev, Sergey},
       title={Duality theory for nonergodic actions},
        date={2014},
        ISSN={1867-5778},
     journal={M{\"u}nster J. Math.},
      volume={7},
      number={2},
       pages={413\ndash 437},
      eprint={\href{http://arxiv.org/abs/1303.6207}{\texttt{arXiv:1303.6207
  [math.OA]}}},
      review={\MR{3426224}},
}

\bib{MR3204665}{book}{
      author={Neshveyev, Sergey},
      author={Tuset, Lars},
       title={Compact quantum groups and their representation categories},
      series={Cours Sp{\'e}cialis{\'e}s [Specialized Courses]},
   publisher={Soci{\'e}t{\'e} Math{\'e}matique de France, Paris},
        date={2013},
      volume={20},
        ISBN={978-2-85629-777-3},
      review={\MR{3204665}},
}

\bib{NY3}{article}{
      author={Neshveyev, Sergey},
      author={Yamashita, Makoto},
       title={Categorical duality for {Y}etter-{D}rinfeld algebras},
        date={2014},
        ISSN={1431-0635},
     journal={Doc. Math.},
      volume={19},
       pages={1105\ndash 1139},
      eprint={\href{http://arxiv.org/abs/1310.4407}{\texttt{arXiv:1310.4407
  [math.OA]}}},
      review={\MR{3291643}},
}

\bib{NY2}{article}{
  author={Neshveyev, Sergey},
   author={Yamashita, Makoto},
  title={Drinfeld center and representation theory for monoidal categories},
  journal={Communications in Mathematical Physics},
  volume={345},
  number={1},
  pages={385--434},
  year={2016},
  publisher={Springer}
}

\bib{NY1}{article}{
   author={Neshveyev, Sergey},
   author={Yamashita, Makoto},
   title={Poisson boundaries of monoidal categories},
   journal={Ann. Sci. \'{E}c. Norm. Sup\'{e}r. (4)},
   volume={50},
   date={2017},
   number={4},
   pages={927--972},
   issn={0012-9593},
   review={\MR{3679617}},
   doi={10.24033/asens.2335},
}

\bib{MR3933035}{article}{
      author={Neshveyev, Sergey},
      author={Yamashita, Makoto},
       title={Categorically {M}orita equivalent compact quantum groups},
        date={2018},
        ISSN={1431-0635},
     journal={Doc. Math.},
      volume={23},
       pages={2165\ndash 2216},
      review={\MR{3933035}},
}

\bib{MR2566309}{article}{
      author={Nest, Ryszard},
      author={Voigt, Christian},
       title={Equivariant {P}oincar\'e duality for quantum group actions},
        date={2010},
        ISSN={0022-1236},
     journal={J. Funct. Anal.},
      volume={258},
      number={5},
       pages={1466\ndash 1503},
      eprint={\href{http://arxiv.org/abs/0902.3987}{\texttt{arXiv:0902.3987
  [math.KT]}}},
         url={http://dx.doi.org/10.1016/j.jfa.2009.10.015},
         doi={10.1016/j.jfa.2009.10.015},
      review={\MR{2566309}},
}

\bib{ostrik03}{article}{
  title={Module categories, weak Hopf algebras and modular invariants},
  author={Ostrik, Victor},
  journal={Transformation groups},
  volume={8},
  number={2},
  pages={177--206},
  year={2003},
  publisher={Springer}
}

\bib{pr1}{article}{
title={A duality theorem for ergodic actions of compact quantum groups on C*-algebras},
  author={Pinzari, Claudia},
  author={Roberts, John E},
  journal={Communications in mathematical physics},
  volume={277},
  number={2},
  pages={385--421},
  year={2008},
  publisher={Springer}
}

\bib{MR943923}{article}{
      author={Woronowicz, S.~L.},
       title={Tannaka-{K}re\u\i n duality for compact matrix pseudogroups.
  {T}wisted {${\rm SU}(N)$} groups},
        date={1988},
        ISSN={0020-9910},
     journal={Invent. Math.},
      volume={93},
      number={1},
       pages={35\ndash 76},
         url={http://dx.doi.org/10.1007/BF01393687},
         doi={10.1007/BF01393687},
      review={\MR{943923 (90e:22033)}},
}

\end{biblist}
\end{bibdiv}

\end{document}